\theoremstyle{definition}
\newtheorem{thm}{Theorem}[section]
\newtheorem{lem}[thm]{Lemma}
\newtheorem{prp}[thm]{Proposition}
\newtheorem{dfn}[thm]{Definition}
\newtheorem{cor}[thm]{Corollary}
\newtheorem{rmk}[thm]{Remark}
\newtheorem{exa}[thm]{Example}
\renewcommand{\qed}{\rule{0.4em}{2ex}}
\newcommand{\beq}{\begin{equation}}
\newcommand{\eeq}{\end{equation}}
\newcommand{\beqr}{\begin{eqnarray*}}
\newcommand{\eeqr}{\end{eqnarray*}}
\newcommand{\bal}{\begin{align*}}
\newcommand{\eal}{\end{align*}}
\newcommand{\bei}{\begin{itemize}}
\newcommand{\eei}{\end{itemize}}
\newcommand{\C}{{\mathbb{C}}}
\newcommand{\N}{{\mathbb{N}}}
\newcommand{\M}{{\mathbb{M}}}
\newcommand{\bra}[1]{\left\langle #1 \right|}
\newcommand{\ket}[1]{\left| #1 \right\rangle}
\title[Subspaces with Schmidt rank bounded below]{The order-$n$ minors of certain  $(n+k) \times n$ matrices}
\author{Priyabrata Bag}
\address{School of Mathematical Sciences\\
	Narsee Monjee Institute of Management Studies\\
	Vile Parle (W), Mumbai, Maharashtra 400056, India}
\email[]{priyabrata.bag@gmail.com}
\author{Santanu Dey}
\address{Department of Mathematics\\
Indian Institute of Technology Bombay\\
Mumbai, Maharshtra 400076, India}
\email[]{santanudey@iitb.ac.in}
\author{Masaru Nagisa}
\address{Department of Mathematics and Informatics \\Chiba University \\
Yayoi 1-33, Chiba, 263-8522, Japan}
\email[]{nagisa@math.s.chiba-u.ac.jp}
\author {Hiroyuki Osaka}
\address{ Department of Mathematical Sciences\\ Ritsumeikan University\\ Kusatsu, Shiga, 525-8577  Japan}
\email[]{osaka@se.ritsumei.ac.jp}
\date{\today} %\textcolor{magenta}{\; (The following footnotes need to be modified.)}}
\keywords{Minor, Schmidt rank, Schmidt number}
\subjclass[2010]{15A15; 81P40; 81P68; 15A03.}
\begin{document}

%\maketitle

\begin{abstract}

We determine sufficient conditions
 for certain classes of $(n+k) \times n$  matrices $E$ to have all order-$n$ minors to be nonzero. For a special class of $(n+1) \times n$ matrices $E,$  we give the formula for the order-$n$ minors. As an application we construct subspaces of $\C^m \otimes \C^n$ of maximal dimension, which does not contain any vector of Schmidt rank less than $k$ and which has a basis of Schmidt rank $k$ for $k=2,3,4$. 
%Let $E$ be a $(n+k) \times n$ matrix, and  We determine sufficient conditions for invertibility of  
%$E^{i_1, i_2, \dots, i_k}$ for certain classes of matrices $E$ for all distinct combinations of $i_k \in \{1,2, \dots,n+k\}$. For %a special class of $(n+1) \times n$ -matrices $E,$  we give the determinant formula for $E^{i_1}$. As an application we %construct of subspaces of $C^m \otimes \C^n$ of maximal dimension, which does not contain any vector of Schmidt rank %less than 4. 

\end{abstract}

\maketitle

%%%%%%%%%%%%%%%%%%%%%%%%%%%%%%%%%%%%%%%%%%%%%%%%%%%%%%%%%%%%%%%%%%%%%%%%%%%%%%%
%%%%%%%%%%%%%%%%%%%%%%%%%%%%%%%%%%%%%%%%%%%%%%%%%%%%%%%
\section{Introduction}

In this article we investigate conditions under which certain 
$(n+k) \times n$ matrices $E$ have all order-$n$ minors to be nonzero. Using this we  conclude that for such $(n+k) \times n$ matrix $E,$ any linear combination of the columns of $E$ will have at least $k+1$ nonzero entries. This property has an application to the computation of Schmidt rank.

For any $n \in \N$ and numbers $a$ and $b$ in $\C,$ set the $(n+1)\times n$ matrix $E_n(a,b) \in \M_{n+1,n}(\C)$ as follows:
\begin{equation} \label{EN}  E_n(a,b) = b \sum_{j=2}^n |j-1 \rangle \langle j | -a \sum_{j=1}^n |j\rangle \langle j| 
                      + a \sum_{j=1}^{n} |j+1\rangle \langle j| -b \sum_{j=1}^{n-1} |j+2\rangle \langle j| , \end{equation}
where $|1\rangle,\ldots, |n+1\rangle$ are the standard basis column vectors of $\C^{n+1}$, as column vectors, and $\langle 1|,\ldots, \langle n| $ are
the transposed standard basis column vectors of $\C^n$.

%For any $n \in \N$ and positive numbers $a$ and $b$ in $\R,$ set the $(n+1)\times n$ matrix 
%\begin{equation} \label{EN}
%E_n(a, b) = 
%\left(
%\begin{array}{ccccccc}
%-a&b&0&0&0&\cdots&0\\
%a&-a&b&0&0&\cdots&0\\
%-b&a&-a&b&0&\cdots&0\\
%0&-b&a&-a&b&\ddots&\vdots\\
%\vdots&\ddots&\ddots&\ddots&\ddots&\ddots&0\\
%\vdots&&\ddots&\ddots&\ddots&\ddots&b\\
%0&\cdots&\cdots&0&-b&a&-a\\
%0&\cdots&\cdots&\cdots&0&-b&a
%\end{array}
%\right).
%\end{equation} 
%
%\textcolor{red}{The positivity of $a$ and $b$ are needed ? It seems that the positivity is not needed.}

In  Section  \ref{MR} we obtain an expression for order-$n$ minors of $E_n(a,b)$ in terms of determinants of certain submatrices of $E_n(a,b).$ By exploiting a recurrence relation and the characteristic polynomial of a certain matrix, we derive formulae for order-$n$ minors of $E_n(a,b).$  For some examples of $E_n(a,b),$ the order-$n$ minors are computed in Section \ref{EXA}.   In Section \ref{IoE} we take an approach based on system of equations, and  conclude that for $|a|\geq 5,$ all the order-$n$ minors of $E_n(a,1)$ are nonzero. We also analyse another class of $(n+2)\times n$ matrices $G_n(a,1)$ using the same approach.

Let ${\mathcal H}$ denote the bipartite Hilbert space $\C^m\otimes\C^n$. By Schmidt decomposition theorem
(cf. Ref.~ \cite{KRP 2004}), any pure state $\ket{\psi}\in {\mathcal H}$ can be written as
\begin{equation}
 \ket{\psi}=\sum_{j=1}^k\alpha_j\ket{u_j}\otimes\ket{v_j} \label{sd}
\end{equation}
for some $k\leq\min\{m,n\}$, where $\{\ket{u_j}:1\leq j\leq k\}$ and
$\{\ket{v_j}:1\leq j\leq k\}$ are orthonormal sets in $\C^m$ and $\C^n$ respectively,
and $\alpha_j$'s are positive real numbers satisfying $\sum_j\alpha_j^2=1$.
\begin{dfn}
 In the Schmidt decomposition \eqref{sd} of a pure bipartite state $\ket{\psi}$ the minimum number
 of terms required in the summation is known as the Schmidt rank of $\ket{\psi}$, and it is
 denoted by $SR(\ket{\psi})$.
\end{dfn}

 As an application of the results of the first four sections, we construct subspaces $\mathcal{T}$ of dimension $(m-k+1)(n-k+1)$ of bipartite finite dimensional Hilbert space $\C^m \otimes \C^n$ in Section \ref{SMDBSR} such that any vector in $\mathcal{T}$ has Schmidt rank greater  than or equal to $k$ where $k=2,3$ and $4.$ In Ref.~\cite{CMW 2008}, it was proved that for a bipartite
system $\C^m \otimes \C^n,$ the dimension of any subspace of Schmidt rank greater than or equal
to $k$ is bounded above by $(m-k+1)(n-k+1).$  Unlike
Ref.~\cite{CMW 2008}, the 
subspaces $\mathcal{T}$ of $\C^m \otimes \C^n$ that we construct also have  bases consisting of elements of Schmidt rank $k.$
For the case when a subspace of $\C^m\otimes\C^n$ is of Schmidt rank  greater than or equal to 2
(that is, the subspace does not contain any product vector), the maximum dimension of that subspace
is  $(m-1)(n-1),$ and this was first proved  in Ref.~\cite{KRP 2004} and Ref.~\cite{Wal 2002}
(cf. Ref.~\cite{Bhat 2006}).

In the bipartite Hilbert space $\C^m\otimes\C^n,$ for any $1 \leq r \leq\min\{m,n\},$
there is at least some state $\ket{\psi}$ with $SR(\ket{\psi})=r.$ Any state $\rho$ on a finite
dimensional Hilbert space ${\mathcal H}$ can be written as
\begin{equation}
 \rho=\sum_jp_j\ket{\psi_j}\bra{\psi_j}, \label{spd}
\end{equation}
where  $\ket{\psi_j}$'s are pure states in ${\mathcal H}$ and $\{p_j\}$ forms a probability
distribution. The following notion was introduced in Ref.~\cite{TH 2000}:
\begin{dfn}
 The Schmidt number of a state $\rho$ on a bipartite finite dimensional Hilbert space ${\mathcal H}$ is defined
 to be the least natural number $k$ such that $\rho$ has a decomposition of the form given in 
 \eqref{spd} with $SR(\ket{\psi_j})\leq k$ for all $j$. The Schmidt number of $\rho$ is
 denoted by $SN(\rho)$.
\end{dfn}

Schmidt number of a state on a bipartite Hilbert space is a measure of entanglement. Entanglement is the key property of quantum systems which is responsible for the higher
efficiency of quantum computation and tasks like teleportation, super-dense coding, etc
(cf. Ref.~\cite{HHHH 2009}). The  following proposition establishes an important relation between  Schmidt number of a state and the lower bound of  Schmidt rank of any non-zero vector in the supporting subspace of the state:

%\textcolor{red}{
%The following should be well-known.
%\vskip 2mm
\begin{prp}
 Let $\mathcal{S}$ be a subspace of ${\mathcal H}=\C^m\otimes\C^n$ which does not contain any non-zero vector of Schmidt rank
 lesser or equal to $k$. Then any state $\rho$ supported on $\mathcal{S}$ has Schmidt number at least $k+1$.
\end{prp}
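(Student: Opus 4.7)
The plan is to argue by contradiction, using the standard fact that any pure state appearing with positive weight in a convex decomposition of a mixed state must lie in the range (equivalently, the support) of that state.

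First I would assume, towards contradiction, that $SN(\rho) \leq k$. By the definition of Schmidt number, there is then a decomposition
\[
\rho = \sum_j p_j \ket{\psi_j}\bra{\psi_j}
\]
with each $p_j > 0$, $\sum_j p_j = 1$, each $\ket{\psi_j}$ a unit vector in $\mathcal H$, and $SR(\ket{\psi_j}) \leq k$ for all $j$.

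Next I would verify the key containment $\ket{\psi_j} \in \mathcal{S}$ for every $j$. Since $\rho$ is supported on $\mathcal{S}$, its range lies in $\mathcal{S}$, so it suffices to show each $\ket{\psi_j}$ lies in the range of $\rho$. For any vector $\ket{\phi}$ orthogonal to the range of $\rho$, one has
\[
0 = \bra{\phi}\rho\ket{\phi} = \sum_j p_j \, |\braket{\phi | \psi_j}|^2,
\]
and since every $p_j > 0$, each term must vanish, forcing $\braket{\phi | \psi_j} = 0$. Thus every $\ket{\psi_j}$ is orthogonal to the orthogonal complement of the range of $\rho$, hence lies in the range of $\rho$, and therefore in $\mathcal{S}$.

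Finally, since each $\ket{\psi_j}$ is a nonzero vector in $\mathcal{S}$ with $SR(\ket{\psi_j}) \leq k$, this contradicts the hypothesis that $\mathcal{S}$ contains no nonzero vector of Schmidt rank at most $k$. Hence $SN(\rho) \geq k+1$. The argument is routine; the only minor subtlety is the standard range/support lemma above, which I expect to be the one place where the proof must actually do some work. No deeper obstacle appears.
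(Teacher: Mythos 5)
Your proof is correct and follows essentially the same route as the paper's: decompose $\rho$ into pure states of Schmidt rank at most $k$, use the support/range lemma (via $\mathrm{Ker}(\rho)^\perp = \mathrm{Range}(\rho)$) to place each $\ket{\psi_j}$ inside $\mathcal{S}$, and derive a contradiction with the hypothesis on $\mathcal{S}$. Your write-up is in fact slightly more careful, since you explicitly note that the weights $p_j$ are strictly positive before concluding each inner product vanishes.
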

%}

\begin{proof}
Let $\rho$ be a state with Schmidt number, $SN(\rho) = r \leq k$. 
So, $\rho$ can be written
as 
$$
\rho =\sum_jp_j |v_j\rangle \langle v_j |,
$$
where $SR(|v_j\rangle) \leq k$
for all $j$ and $\{p_j\}$ forms a probability distribution. 
Let $|\psi\rangle \in \mathrm{Ker}(\rho)$. Then we have
$$
0 = \langle\psi|\rho| \psi \rangle =\sum_jp_j|\langle v_j|\psi\rangle|^2.
$$
This means $|v_j\rangle \in \mathrm{Ker}(\rho)^\perp = \mathrm{Range}(\rho)$ for all $j$.

If such a $\rho$ is supported on $\mathcal{S}$, then the above statement gives a contradiction to the fact that $\mathcal{S}$ does not contain any vector of Schmidt rank lesser or equal to $k$.
This completes the proof.
\end{proof}

%%%%%%%%%%%%%%%%%%%%%%%%%%%%%%%%%%%%%%%%%%%%%%%%%%%%%%
\section{Main Results}\label{MR}

\begin{lem}\label{lem:$D_n$}
For any $n \in \N$ and numbers $a$ and $b$ in $\C$ set the $n \times n$ matrix
$$
D_n(a, b) = b \sum_{j=2}^n |j-1 \rangle \langle j | -a \sum_{j=1}^n |j\rangle \langle j| 
                      + a \sum_{j=1}^{n-1} |j+1\rangle \langle j| -b \sum_{j=1}^{n-2} |j+2\rangle \langle j| \in \M_n(\C) .
$$
Then we have $|D_n(a,b)| = -a |D_{n-1}(a,b)| - ab |D_{n-2}(a, b)| - b^3 |D_{n-3}(a, b)|$ \
 $(n \geq 2)$, where  $|D_{-1}(a,b)|=0$ and $|D_0(a, b)| = 1$. Note that $|D_1(a,b)| = -a$.
 %{We may need to specify $|D_1(a,b)|$ as well.}
\end{lem}

\vskip 3mm

We also define the $n \times n$ matrix $F_n(a,b)\in \M_n(\C)$ as follows:
\begin{align*}
F_n(a, b) & = b \sum_{j=3}^n |j-2 \rangle \langle j | -a \sum_{j=2}^n |j-1\rangle \langle j| 
                      + a \sum_{j=1}^{n} |j\rangle \langle j| -b \sum_{j=1}^{n-1} |j+1\rangle \langle j|  \\
           & = -b \sum_{j=2}^n |j \rangle \langle j-1 | +a \sum_{j=1}^n |j\rangle \langle j| 
                      - a \sum_{j=1}^{n-1} |j\rangle \langle j+1| +b \sum_{j=1}^{n-2} |j\rangle \langle j+2|  \\
           & =  D_n(-a,-b)^t.
\end{align*}
Then we have $|F_n(a,b)| = (-1)^n |D_n(a,b)| $ \ $(n \geq -1)$.

\vskip 3mm

For an $n\times n$ matrix $A =\sum_{i,j=1}^n a_{ij} |i\rangle \langle j| \in \M_n(\C)$, 
we define $A^\uparrow$ and $A^\downarrow$ in $\M_{n-1}(\C)$ as follows:
\[  A^\uparrow = \sum_{i,j=1}^{n-1} a_{i,j}|i\rangle \langle j| \text{ and }
     A^\downarrow = \sum_{i,j=2}^n a_{i,j} |i\rangle \langle j| , \]
if $n\ge 2$.

\vskip 2mm

For numbers $a,b \in \C$, $A=\sum_{i,j=1}^n a_{i,j} |i\rangle \langle j| \in \M_n(\C)$ and
$B=\sum_{i,j=1}^m b_{i,j} |i\rangle \langle j| \in \M_m(\C)$, 
we define the matrix $C=\sum_{i,j=1}^{n+m}c_{i,j} |i\rangle \langle j|\in \M_{n+m}(\C)$ as follows:
\[   c_{ij} =\begin{cases}  a_{ij} \; & \text{ if } 1\le i,j\le n,  \\   b_{i-n,j-n}  & \text{ if } n+1 \le i,j \le n+m \\
                    a & \text{ if } (i,j)=(n+1,n) \\  b & \text{ if } (i,j)=(n,n+1) \\  0 & \text{ otherwise }
                   \end{cases}.\]
 We denote this matrix $C$ by $A {}_a\!\!\oplus_b B$.

\begin{lem}\label{lem:$A+B$}
In this setting, we have
\[   |A {}_a\!\!\oplus_b B|  = |A| |B| - ab |A^\uparrow| |B^\downarrow | , \]
where we set $|A^\uparrow|=|A^\downarrow|=1$ if $A\in \M_1(\C)$.
\end{lem}
\begin{proof}
We denote $A {}_a\!\!\oplus_b B =C=\sum_{i,j=1}^{n+m}c_{i,j} |i\rangle \langle j|$.
For $k\le l$, we denote $S(k,l)$ the set of all permutations on $\{k, k+1,\ldots,l -1,l \}$.
Let $\sigma \in S(1,n+m)$ and ${\rm sgn}(\sigma)$ denote the signature of permutation $\sigma$.
Then $\sigma$ has the form which is one of the following type:
\begin{enumerate}
\item[(1)] there exist $\sigma_1\in S(1,n)$ and $\sigma_2\in S(n+1,n+m)$ such that
\[  \sigma(i) =\sigma_1(i) \; (1\le i \le n) \text{ and } \sigma(i) = \sigma_2(i) \; (n+1\le i \le n+m) . \]
\item[(2)] there exist $\tau_1 \in S(1,n-1)$ and $\tau_2\in S(n+2,n+m)$ such that
\[  \sigma(n)=n+1, \sigma(n+1)=n, \sigma(i) =\tau_1(i) \; (1\le i \le n-1) \text{ and } \sigma(i) = \tau_2(i) \; (n+2\le i \le n+m) . \]
\item[(3)] $\sigma$ has the form of neither type (1) nor type (2).
\end{enumerate} 
Then we have
\[   {\rm sgn}(\sigma) \prod_{i=1}^{n+m}c_{i,\sigma(i)} =
              \begin{cases}
                  {\rm sgn}(\sigma_1){\rm sgn}(\sigma_2)\prod_{i=1}^n c_{i,\sigma_1(i)}
                         \prod_{i=n+1}^{n+m} c_{i,\sigma_2(i)}    & \text{ case}(1) \\
                  -ab\, {\rm sgn}(\tau_1) {\rm sgn}(\tau_2) \prod_{i=1}^{n-1} c_{i,\tau_1(i)}
                        \prod_{i=n+2}^{n+m} c_{i, \tau_2(i)}  & \text{ case}(2) \\
                 0   & \text{ case}(3)
               \end{cases}.
\]
This implies that
\begin{align*}
  |C| & = \sum_{\sigma\in S(1,n+m)}{\rm sgn}(\sigma) \prod_{i=1}^{n+m} c_{i,\sigma(i)}  \\
    &  = \sum_{\sigma_1\in S(1,n),\sigma_2\in S(n+1,n+m)}{\rm sgn}(\sigma_1) {\rm sgn}(\sigma_2) 
            \prod_{i=1}^n c_{i,\sigma_1(i)} \prod_{i=n+1}^{n+m}c_{i,\sigma_2(i)}  \\
    & \qquad  - ab \sum_{\tau_1\in S(1,n-1), \tau_2\in S(n+2,n+m)} {\rm sgn}(\tau_1){\rm sgn}(\tau_2)
            \prod_{i=1}^{n-1} c_{i,\tau_1(i)} \prod_{i=n+2}^{n+m}c_{i,\tau_2(i)}  \\
    & = \left( \sum_{\sigma_1\in S(1,n)}{\rm sgn}(\sigma_1) \prod_{i=1}^n a_{i,\sigma_1(i)} \right)
            \left( \sum_{\sigma_2\in S(1,m)}{\rm sgn}(\sigma_2) \prod_{i=1}^{m}b_{i,\sigma_2(i)} \right)  \\
    & \qquad  - ab \left( \sum_{\tau_1\in S(1,n-1)} {\rm sgn}(\tau_1) \prod_{i=1}^{n-1} a_{i,\tau_1(i)} \right)
            \left( \sum_{\tau_2\in S(2,m)} {\rm sgn}(\tau_2) \prod_{i=2}^{m}b_{i,\tau_2(i)} \right)  \\
    & =  |A| |B| -ab |A^\uparrow| |B^\downarrow|  .
\end{align*}
\end{proof}

\vskip 3mm

\begin{prp}\label{prp:$E_n$}
For $1 \leq k \leq n + 1$ and $a$ and $b$ in $\C$,  let $E_n^k(a,b)$ be a matrix which is obtained by deleting the $k$-th row of $E_n(a,b)$ which is defined in equation \eqref{EN}. 
Then we have 
\begin{align*}
|E_n^k(a,b)| & = |D_{k-1}(a,b)||F_{n-(k-1)}(a,b)| + |D_{k-2}(a,b)||F_{n-k}(a,b)|b^2  \\
                & = (-1)^{n-k+1} (|D_{k-1}(a,b)| |D_{n-k+1}(a,b)| - |D_{k-2}(a,b)| |D_{n-k}(a,b)|b^2).
\end{align*} 
\end{prp}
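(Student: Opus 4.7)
The plan is to expand $|E_n^k(a,b)|$ by the generalized Laplace formula along the top $k-1$ rows, and to use the banded structure of $E_n(a,b)$ to collapse the sum to two explicit terms.

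Partition the rows of $E_n^k(a,b)$ into a top block $A$ of size $(k-1) \times n$ consisting of rows $1,\dots,k-1$ of $E_n(a,b)$, and a bottom block $B$ of size $(n-k+1) \times n$ consisting of rows $k+1,\dots,n+1$. The generalized Laplace expansion then reads
\[
|E_n^k(a,b)| \;=\; \sum_{\substack{I \subset \{1,\dots,n\}\\ |I|=k-1}} (-1)^{\binom{k}{2} + \sum_{j \in I} j}\, \det(A[I])\, \det(B[I^c]),
\]
where $A[I]$ and $B[I^c]$ denote the submatrices with columns indexed by $I$ and $I^c$. Since row $i$ of $A$ (with $i \leq k-1$) is supported on columns $\leq i+1 \leq k$, columns $k+1,\dots,n$ of $A$ vanish identically, so $\det(A[I])=0$ unless $I \subset \{1,\dots,k\}$. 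Symmetrically, row $i$ of $B$ (with $i \geq k+1$) is supported on columns $\geq i-2 \geq k-1$, so $\det(B[I^c])=0$ unless $\{1,\dots,k-2\} \subset I$. Combined with $|I|=k-1$, these constraints leave at most two admissible sets, $I_1=\{1,\dots,k-1\}$ and $I_2=\{1,\dots,k-2,k\}$, differing by a single column.

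Next I would identify the four submatrices directly from the definitions. For $I_1$: the block $A[I_1]$ is the top-left $(k-1) \times (k-1)$ submatrix of $E_n(a,b)$, which matches $D_{k-1}(a,b)$ on the nose, while $B[I_1^c]$ (rows $k+1,\dots,n+1$ on columns $k,\dots,n$) matches $F_{n-k+1}(a,b)$. For $I_2$: the last column of $A[I_2]$ is column $k$ of $A$, which contains only the single entry $b$ in row $k-1$; cofactor expansion along that column yields $\det(A[I_2]) = b\,|D_{k-2}(a,b)|$. Likewise, the first column of $B[I_2^c]$ is column $k-1$ of $B$, which contains only the single entry $-b$ in row $k+1$, giving $\det(B[I_2^c]) = -b\,|F_{n-k}(a,b)|$.

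The last step is a sign count. Since $I_2$ is obtained from $I_1$ by replacing the index $k-1$ with $k$, the two Laplace signs differ by a factor of $-1$; a brief check shows the $I_1$ term contributes with sign $+1$ and the $I_2$ term with sign $-1$. Summing the two surviving contributions gives the first identity of the proposition, and substituting $|F_m(a,b)| = (-1)^m |D_m(a,b)|$ produces the second. The main subtlety is the combinatorial pruning that isolates $I_1$ and $I_2$; once that is in place, everything reduces to routine cofactor expansions, with the degenerate cases $k=1$ and $k=n+1$ handled automatically by the conventions $|D_{-1}|=|F_{-1}|=0$.
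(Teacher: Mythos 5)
Your proof is correct, and it takes a genuinely different route from the paper's. The paper argues case by case: it computes $|E_n^k(a,b)|$ explicitly for $k=1,2,3$, and for $k\geq 4$ it splits the Leibniz sum over permutations into the part coming from $S_{k-2}\times S_{n-(k-2)}$ and the part from $(S_{k-1}\setminus S_{k-2})\times S_{n-(k-1)}$, which produces $|D_{k-2}(a,b)|$ and a combination $-a|D_{k-2}|-ab|D_{k-3}|-b^3|D_{k-4}|$ that must then be reassembled into $|D_{k-1}(a,b)|$ via the recurrence of Lemma~\ref{lem:$D_n$}. You instead perform a single generalized Laplace expansion along the first $k-1$ rows and use the banded support of $E_n(a,b)$ (row $i$ lives in columns $i-2,\dots,i+1$) to prune the column sets to $I_1=\{1,\dots,k-1\}$ and $I_2=\{1,\dots,k-2,k\}$; the four block determinants are read off directly as $|D_{k-1}|$, $|F_{n-k+1}|$, $b|D_{k-2}|$, $-b|F_{n-k}|$, and the Laplace signs $+1$ and $-1$ (which I checked: $\sum_{j\in I_1}j=\binom{k}{2}$ and $\sum_{j\in I_2}j=\binom{k}{2}+1$) give the stated identity, with $|F_m|=(-1)^m|D_m|$ yielding the second form. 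Your approach buys uniformity — no separate small-$k$ cases and no appeal to the recurrence lemma — at the cost of invoking the generalized Laplace formula and a careful sign bookkeeping; the paper's approach is more elementary in its tools but longer and dependent on Lemma~\ref{lem:$D_n$}. The only spot worth spelling out slightly more is the boundary case $k=n+1$, where $I_2$ simply does not exist (column $k$ is absent), which is consistent with the convention $|F_{-1}|=0$ that you cite.
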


%\begin{prp}\label{prp:$E_n$}
%For $1 \leq k \leq n + 1$ and positive numbers $a$ and $b$  let $E_n^k(a,b)$ be a matrix which is obtained by deleting $k$ row of 
%$E_n(a,b)$ which is defined in equation \eqref{EN}. 
%Then we have
%$$
%E_n^k(a,b)| = |D_{k-1}(a,b)||F_{n-(k-1)}(a,b)| + |D_{k-2}(a,b)||F_{n-k}(a,b)|b^2,$$
%$(|(D_0 (a,b)| = 1$, $|D_{-1}(a,b)|  =0$, $|F_0(a,b)| = 1$, $|F_{-1}(a,b)| = 0)$
%\end{prp}

\begin{proof}
Since $E_n^1(a,b) = F_n(a,b)$, $E_n^{n+1}(a,b) = D_n(a,b)$, $|D_0(a,b)|=|F_0(a,b)|=1$ and
$|D_{-1}(a,b)|=|F_{-1}(a,b)|=0$, we have
\begin{align*}
  |E_n^1(a,b)| & = |F_n(a,b)| = |D_0(a,b)| |F_n(a,b)| + |D_{-1}(a,b)| |F_{n-1}(a,b)| b^2 \\
\intertext{and}
  |E_n^{n+1}(a,b)| & = |D_n(a,b)| = |D_n(a,b)| |F_0(a,b)| + |D_{n-1}(a,b)| |F_{-1}(a,b)|b^2.
\end{align*}

In the case $2\le k \le n$, we have $E_n^k(a,b) = D_{k-1}(a,b) {}_{(-b)}\oplus_b F_{n-k+1}(a,b)$.
It implies that, by Lemma~\ref{lem:$A+B$}, 
\begin{align*}
    |E_n^k(a,b)| & = |D_{k-1}(a,b)| |F_{n-k+1}(a,b)| +b^2 |D_{k-1}(a,b)^\uparrow| |F_{n-k+1}(a,b)^\downarrow| \\
                            & = |D_{k-1}(a,b)| |F_{n-k+1}(a,b)| + |D_{k-2}(a,b)| |F_{n-k}(a,b)| b^2.
\end{align*} 

\end{proof}

%\vskip 3mm
%
%Note that for each $n \in \N$ $|F_n(a,b)| = (-1)^n|D_n(a,b)|$.

\vskip 3mm

If $b= 0$ and $1 \leq k \leq n+1$, 
\begin{align*}
|E_n^k(a,b)| &= (-1)^{n-k+1}|D_{k-1}(a,0)||D_{n-k+1}(a,0)|\\
&= (-1)^{n-k+1}(-a)^{k-1}(-a)^{n-k+1}\\
&= (-1)^{k-1}a^n.
\end{align*}

If $b\not=0$, since $|E_n^k(a,b)| = b^{n}\left|E_n^k\left(\dfrac{a}{b},1\right)\right|$, we may assume that $b = 1$.

\vskip 3mm

After this we write $E_n(a,1) = E_n(a)$ and $D_n(a, 1) = D_n(a)$.

\vskip 3mm

\begin{thm}\label{thm:$E_n$}
For  $n \in \N$ and 
%% a positive number 
$a \in \C$,  let $d_{-1} = 0$, $d_0 = 1$ and $d_n = |D_n(a)|$.
 Then for $1 \leq k \leq n + 1$ we have 

\begin{align*}
|E_n^k(a)| & = (-1)^{n-k+1}(d_{k-1}d_{n-k+1} - d_{k-2}d_{n-k})  \\
                   & =(-1)^{k+1} |E_n^{n-k+2}(a)|\\
                   &= (-1)^n|E_n^{n-k+2}(a)|
\end{align*}

and 
$$
  d_k = \frac{1}{k!} \frac{d^k}{dx^k}\left.\left( \frac{1}{1+ax+ax^2+x^3}\right) \right|_{x=0} .
$$
 Moreover, if $x^3 + ax^2 + ax + 1 = 0$ has $3$ different solutions $\alpha, \beta, \gamma,$ then  
we have for $1 \leq k$
$$
d_k = \dfrac{1}{\alpha^{k+1}(\alpha-\beta)(\gamma - \alpha)} + \dfrac{1}{\beta^{k+1}(\alpha - \beta)(\beta-\gamma)}
+ \dfrac{1}{\gamma^{k+1}(\gamma - \alpha)(\beta - \gamma)}.
$$
\end{thm}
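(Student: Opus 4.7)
The plan has three parts, matching the three assertions.

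\medskip

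\textbf{Part 1: the minor formula.} This follows immediately from Proposition~\ref{prp:$E_n$} by specializing $b=1$ and using that $|F_m(a,1)|=(-1)^m|D_m(a,1)|=(-1)^m d_m$. Thus
\[
|E_n^k(a,1)|=d_{k-1}(-1)^{n-k+1}d_{n-k+1}+d_{k-2}(-1)^{n-k}d_{n-k}=(-1)^{n-k+1}(d_{k-1}d_{n-k+1}-d_{k-2}d_{n-k}),
\]
which is the claim.

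\medskip

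\textbf{Part 2: the Taylor coefficient formula.} I would read off the recurrence from Lemma~\ref{lem:$D_n$} with $b=1$, namely
\[
d_n+a\,d_{n-1}+a\,d_{n-2}+d_{n-3}=0\qquad(n\ge 2),
\]
with initial data $d_{-1}=0$, $d_0=1$, $d_1=-a$. Set the formal generating function $f(x)=\sum_{k\ge 0}d_k x^k$ and compute $(1+ax+ax^2+x^3)f(x)$. Using the initial values, the coefficients of $x^0,x^1,x^2$ come out to $1$, $d_1+ad_0=0$, and $d_2+ad_1+ad_0=0$; for $k\ge 3$ the coefficient of $x^k$ is $d_k+ad_{k-1}+ad_{k-2}+d_{k-3}$, which vanishes by the recurrence. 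Hence $f(x)=1/(1+ax+ax^2+x^3)$, and the Taylor coefficient formula
\[
d_k=\frac{f^{(k)}(0)}{k!}=\frac{1}{k!}\frac{d^k}{dx^k}\left(\frac{1}{1+ax+ax^2+x^3}\right)\bigg|_{x=0}
\]
drops out.

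\medskip

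\textbf{Part 3: the closed form via partial fractions.} The polynomials $x^3+ax^2+ax+1$ and $1+ax+ax^2+x^3$ are the same, so under the distinct-roots hypothesis we have $1+ax+ax^2+x^3=(x-\alpha)(x-\beta)(x-\gamma)$. I decompose
\[
\frac{1}{(x-\alpha)(x-\beta)(x-\gamma)}=\frac{1}{(\alpha-\beta)(\alpha-\gamma)}\cdot\frac{1}{x-\alpha}+(\text{two symmetric terms}),
\]
then expand each simple fraction as a geometric series via
\[
\frac{1}{x-\alpha}=-\frac{1}{\alpha}\sum_{k=0}^{\infty}\left(\frac{x}{\alpha}\right)^{k}=-\sum_{k=0}^{\infty}\frac{x^{k}}{\alpha^{k+1}}.
\]
Reading off the coefficient of $x^k$ gives $d_k=-\sum\frac{1}{\alpha^{k+1}(\alpha-\beta)(\alpha-\gamma)}$, and rewriting $-(\alpha-\gamma)=\gamma-\alpha$ (and symmetric sign flips in the other two summands) produces the claimed formula with the exact sign pattern $(\alpha-\beta)(\gamma-\alpha)$, $(\alpha-\beta)(\beta-\gamma)$, $(\gamma-\alpha)(\beta-\gamma)$ in the denominators.

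\medskip

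No step looks like a real obstacle; the only point that needs care is bookkeeping the signs when rearranging $(\alpha-\gamma)$ versus $(\gamma-\alpha)$ etc.\ in the partial-fraction output so that the denominators match those in the statement.
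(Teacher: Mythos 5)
Your proposal is correct and follows essentially the same route as the paper: specializing Proposition~\ref{prp:$E_n$} at $b=1$, identifying the generating function $\sum d_k x^k = 1/(1+ax+ax^2+x^3)$ via the recurrence of Lemma~\ref{lem:$D_n$}, and then doing the partial-fraction/geometric-series expansion (with the same sign bookkeeping) for the closed form.
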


%\begin{thm}
%For  $n \in \N\cup \{-1, 0\} $ and a positive number $a \in \R$  let $d_{-1} = 0$, $d_0 = 1$ and $d_n = |D_n(a,1)|$. Then for $1 \leq k \leq n + 1$ we have 
%$$
%|E_n^k(a,1)| = (-1)^{n-k+1}(d_{k-1}d_{n-k+1} - d_{k-2}d_{n-k}).
%$$ 
%Moreover, if $x^3 + ax^2 + ax + 1 = 0$ has $3$ different solutions $\alpha, \beta, \gamma,$ then  
%we have for $1 \leq k$
%$$
%d_k = \dfrac{1}{\alpha^{k+1}(\alpha-\beta)(\gamma - \alpha)} + \dfrac{1}{\beta^{k+1}(\alpha - \beta)(\beta-\gamma)}
%+ \dfrac{1}{\gamma^{k+1}(\gamma - \alpha)(\beta - \gamma)}.
%$$
%\end{thm}

\begin{proof}

%Since $|F_n(a,1)| = (-1)^n|D_n(a,1)|$, we have by Proposition~\ref{prp:$E_n$}
By  Proposition~\ref{prp:$E_n$}
we have
\begin{align*}
|E_n^k(a)| &= (-1)^{n-k+1}d_{k-1}d_{n-k+1} + (-1)^{n-k}d_{k-2}d_{n-k}\\
&= (-1)^{n-k+1}(d_{k-1}d_{n-k+1} - d_{k-2}d_{n-k}).
\end{align*}

On the contrary, $(-1)^n|E_n^{n-k+2}(a)| = (-1)^n(-1)^{k-1}(d_{n-k+1}d_{k-1} - d_{n-k}d_{k-2}) = (-1)^{n-k+1}(d_{k-1}d_{n-k+1} - d_{k-2}d_{n-k})$.

Define the formal power series $f(x) =\sum\limits_{k=0}^\infty d_k x^k$, where the sequence $\{d_n\}_{n=0}^\infty$ satisfies the relation
$d_n +ad_{n-1} +ad_{n-2} + d_{n-3}=0 \ (n \geq 2)$.
Since $(1+ax+ax^2+x^3)f(x) = d_0 + (d_1+ad_0)x + (d_2+ad_1+ad_0)x^2 + \sum\limits_{k=3}^\infty(d_k+ad_{k-1}+ad_{k-2}+d_{k-3})x^k=1$,
$f(x) = \dfrac{1}{1+ax+ax^2+x^3}$ is analytic at a neighborhood of $0$.
By Maclaurin's expansion of $f$, we have
\[d_k = \frac{1}{k!} \frac{d^k}{dx^k}\left.\left( \frac{1}{1+ax+ax^2+x^3}\right) \right|_{x=0} .  \]

%By Lemma~\ref{lem:$D_n$}
%we have 
%$$d_n = -ad_{n-1} - ad_{n-2}-d_{n-3}.
%$$
%
%Then we have 
%\begin{align*}
%\left(
%\begin{array}{c}
%d_n\\
%d_{n-1}\\
%d_{n-2}
%\end{array}
%\right)
%&=
%\left(\begin{array}{ccc}
%-a&-a&-1\\
%1&0&0\\
%0&1&0
%\end{array}
%\right)
%\left(
%\begin{array}{c}
%d_{n-1}\\
%d_{n-2}\\
%d_{n-3}
%\end{array}
%\right)
%\\
%&=
%\left(\begin{array}{ccc}
%-a&-a&-1\\
%1&0&0\\
%0&1&0
%\end{array}
%\right)^{n-2}
%\left(
%\begin{array}{c}
%d_{2}\\
%d_{1}\\
%d_{0}
%\end{array}
%\right).
%\end{align*}
%
%Let $A = \left(\begin{array}{ccc}
%-a&-a&-1\\
%1&0&0\\
%0&1&0
%\end{array}
%\right)
%$.
%Then 
%$$
%|xI - A| = x^3 + ax^2 + ax + 1 = (x +1)(x^2 + (a-1)x+1).
%$$
%
%Hence we have 
%\begin{enumerate}
%\item
%If $a = 3$, $x = -1$ is a triple solution.
%\item
%If $a = -1$, $x= -1$ or  $x = 1$ is a weight solution.
%\item
%If $a \not=-1, 3$, there are 3 different solutions.
%\end{enumerate}
%
%\vskip 2mm
%
%Let $f(x) = d_0 + d_1x + d_2x^2 +d_3x^3 + d_4s^4 + \cdots$.
%Then 
%\begin{align*}
%axf(x) &= ad_0x + ad_1x^2+ad_2x^3 + ad_3x^4+\cdots \\
%ax^2f(x) &= ad_0x^2 + ad_1x^3 +ad_2x^4 +\cdots\\
%x^3f(x)&= d_0x^3 + d_1x^4 +\cdots
%\end{align*}
%Hence 
%$
%(1 + ax + ax^2 + x^3)f(x) = 1$, that is, 
%$$
%f(x) = \dfrac{1}{1 + ax + ax^2 + x^3}.
%$$
%
Suppose  that $1 + ax + ax^2 + x^3 = 0$ has $3$ different solutions $\alpha, \beta, \gamma$.  
Then,
\begin{align*}
f(x) & = \frac{1}{(x-\alpha)(x-\beta)(x-\gamma)}  \\ 
&= \dfrac{1}{(\alpha-\beta)(\gamma - \alpha)}\dfrac{1}{\alpha - x} + \dfrac{1}{(\alpha - \beta)(\beta-\gamma)}\dfrac{1}{\beta-x}
+ \dfrac{1}{(\gamma - \alpha)(\beta - \gamma)}\dfrac{1}{\gamma - x}\\
&= \dfrac{1}{\alpha(\alpha-\beta)(\gamma - \alpha)}\dfrac{1}{1 - x/\alpha} + \dfrac{1}{\beta(\alpha - \beta)(\beta-\gamma)}\dfrac{1}{1-x/\beta}\\
& \qquad \qquad \qquad + \dfrac{1}{\gamma(\gamma - \alpha)(\beta - \gamma)}\dfrac{1}{1 - x/\gamma}\\
&= \sum_{k=1}^\infty \left(
\dfrac{1}{\alpha^{k+1}(\alpha-\beta)(\gamma - \alpha)} + \dfrac{1}{\beta^{k+1}(\alpha - \beta)(\beta-\gamma)}
+ \dfrac{1}{\gamma^{k+1}(\gamma - \alpha)(\beta - \gamma)}\right)x^k.
\end{align*}
Hence, for $1 \leq k $
$$
d_k = \dfrac{1}{\alpha^{k+1}(\alpha-\beta)(\gamma - \alpha)} + \dfrac{1}{\beta^{k+1}(\alpha - \beta)(\beta-\gamma)}
+ \dfrac{1}{\gamma^{k+1}(\gamma - \alpha)(\beta - \gamma)}.
$$
\end{proof}

%%%%%%%%%%%%%%%%%%%%%%%%%%%%%%%%%%%%%%%%%%%%%%%%
\section{Examples}\label{EXA}

\begin{exa}\label{exa:$(3,1)$}
Set $a = 3$ and $b=1$. Then we have 
\[  d_n = |D_n(3)| = \frac{1}{n!}\frac{d^n}{dx^n}(1+x)^{-3} |_{x=0} = \frac{(-1)^n}{2}(n+1)(n+2) \]
and
\[  |E_n^k(3)| = \frac{(-1)^{k-1}}{2}k(n+2)(n-k+2) .  \]
%\begin{align*}
%|D_n(3,1)| &= -3 |D_{n-1}(3,1)| - 3 |D_{n-2}(3, 1)| - |D_{n-3}(3, 1)| \ (n \geq 3), \\
%|D_1(3,1)| &= -3, |D_2(3,1)| = 6, |D_3(3,1)| = -10\\
%|F_n(3,1)| &= 3|F_{n-1}(3,1)|  - 3|F_{n-2}(3,1)|  + |F_{n-3}(3,1)| (n \geq 3), \\
%|F_1(3,1)| &= 3, |F_2(3,1)| = 6, |F_3(3,1)| = 10. 
%\end{align*}
%Then from the standard argument we have for $n \geq 1$ 
%
%\begin{align*}
%D_n(3,1) &= (-1)^{n-1}(-\dfrac{1}{2}n^2 - \dfrac{3}{2}n -1),\\
%F_n(3,1) &= \dfrac{1}{2}n^2 + \dfrac{3}{2}n + 1.
%\end{align*}
%
%Hence from Proposition~\ref{prp:$E_n$} we have for $1 \leq k \leq n+1$
%\begin{align*}
%|E_n^k(3,1)| = \dfrac{1}{2}nk^2 + k^2 - \dfrac{1}{2}n^2k - 2kn - 2k \ (k:\hbox{even}),\\
%|E_n^k(3,1)| = -\dfrac{1}{2}nk^2 - k^2 + \dfrac{1}{2}n^2k + 2kn + 2k \ (k:\hbox{odd}).\\
%\end{align*}
 Note that if $|E_n^k(3)| = 0$, then $k = n+2$. Therefore, for any $1 \leq k \leq n+1$ we have $|E_n^k(3)| \not=0,$ that is, all order-$n$ minors of $E_n(3)$ are nonzero.

\end{exa}

\vskip 2mm

\begin{exa}
Set $n = 10$ and $a=2$. Then $|E_{10}^k(2)| = 0$ for all $k\in \{1,2,\ldots,11\}$.

Indeed, the equation $x^3+2x^2+2x+1=(x+1)(x^2+x+1)=0$ has solutions $-1, \omega, \omega^2$, where $\omega=\dfrac{-1+\sqrt{3}\iota}{2}.$
Then we have
\begin{align*}
  d_k & =\frac{1}{(-1)^{k+1}(-1-\omega)(\omega^2+1)} + \frac{1}{\omega^{k+1}(-1-\omega)(\omega-\omega^2)}
              +\frac{1}{\omega^{2(k+1)}(\omega^2+1)(\omega-\omega^2)} \\
       & = (-1)^k - \frac{\omega^{k+2}(1-\omega^k)}{1-\omega}
 = \begin{cases} (-1)^k  & \text{ if } k\equiv 0 \text{ (mod 3)} \\  (-1)^k-1 & \text{ if } k\equiv 1 \text{ (mod 3)}  \\ (-1)^k +1  & \text{ if } k\equiv 2 \text{ (mod 3)} \end{cases}, 
\end{align*}

since $\omega^3=1$.
So the sequence $\{d_k\}_{k=-1}^\infty$ is periodic with the period 6 and 
\[  (d_0,d_1,d_2,d_3,d_4,d_5)=(1,-2,2,-1,0,0).  \]
By Theorem~\ref{thm:$E_n$} it is easily computed that $|E_{10}^1(2)|=|E_{10}^2(2)|=\cdots =|E_{10}^6(2)|=0$, and
we can get $|E_{10}^k(2)| = 0$ for all $k\in \{1,2,\ldots,11\}$.

%Indeed, from $x^3 + 2x^2 + 2x + 1 = 0$, it has three different solutions $\alpha = -1$, $\beta = \dfrac{-1+\sqrt{3}i}{2}$, $\gamma = \dfrac{-1-\sqrt{3}i}{2}$. 
%We note that $\alpha - \beta = \gamma$ and $\beta - \gamma = \sqrt{3}i$.
%
%Hence, for $1 \leq k \leq 11$
%\begin{align*}
%|E_{10}^5(2,1)| &= (-1)^{11-5}(d_{5-1}d_{10-5+1}-d_{5-2}d_{10-5})\\
%&= d_4d_6-d_3d_5\\
%d_k&= \dfrac{\sqrt{3}i+(-1)^k(\gamma^k-\beta^k)}{(-1)^k\gamma\sqrt{3}i}.
%\end{align*} 
%Since $d_4 = d_5=0$, we have $|E_{10}^5(2,1)|=d_4d_6-d_3d_5 = 0$.

\end{exa}

%%%%%%%%%%%%%%%%%%%%%%%%%%%%%%%%%%%%%%%%%%%%%%%%%%%%%
\section{Invertibility of \texorpdfstring{$E^k_n(a)$}{Ekn(a)}} \label{IoE}

In this section if $|a| \geq 5$, we show that for $1 \leq k \leq n+1$ the matrix $E_n^k(a)$, which is obtained by deleting the $k$-th row of $E_n(a)$, is invertible.

\vskip 3mm

The following lemma is useful.

\vskip 3mm

\begin{lem}\label{lem:down}
Let $a(i)$, $b(i)$, $c(i)$, $d(i) \in \C$  with $|a(i)| = |d(i)| = 1$ and $|b(i)| = |c(i)| = \alpha \geq 5$ $(i=1, 2, \dots, k-1)$. If

\begin{align*}
&c(1)x_1 + d(1)x_2=0\\
&b(2)x_1 + c(2)x_2 + d(2)x_3 = 0\\
&a(3)x_1 + b(3)x_2 + c(3)x_3 + d(3)x_4 = 0\\
&a(4)x_2 + b(4)x_3 + c(4)x_4 + d(4)x_5 = 0\\
&\cdots\\
&a(k-1)x_{k-3} +b(k-1)x_{k-2} + c(k-1)x_{k-1} + d(k-1)x_k = 0,
\end{align*}
then $|x_{l+1}| \geq (\alpha -2 )|x_l|$ \ $(l = 1, 2, \dots, k-1)$.
\end{lem}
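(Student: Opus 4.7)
The plan is to argue by induction on $l$, using the $l$-th equation to express $x_{l+1}$ in terms of the earlier variables and then invoking the reverse triangle inequality in the form $|d(l)x_{l+1}| \geq |c(l)x_l| - |b(l)x_{l-1}| - |a(l)x_{l-2}|$. Once the first equation forces $x_2 \neq 0$, every subsequent $x_{l+1}$ is automatically nonzero via the inductive bound, so division-by-zero issues never arise and each ``$|x_{l-1}| \leq |x_l|/(\alpha-2)$'' step is legal.

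The first two steps are slightly anomalous because the corresponding equations have fewer terms, so I would dispatch them by hand. For $l = 1$ the equation $c(1)x_1 + d(1)x_2 = 0$ gives $|x_2| = \alpha|x_1| \geq (\alpha-2)|x_1|$ directly. For $l = 2$, solving for $x_3$ and using $|x_1| \leq |x_2|/(\alpha - 2)$ from the previous step reduces the target inequality $|x_3| \geq (\alpha - 2)|x_2|$ to the quadratic condition $\alpha(\alpha - 3) \geq (\alpha - 2)^2$, i.e.\ $\alpha \geq 4$, which is comfortably implied by $\alpha > 5$.

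For the main inductive step $l \geq 3$, I would solve the four-term equation for $x_{l+1}$ and apply the reverse triangle inequality to obtain $|x_{l+1}| \geq \alpha|x_l| - \alpha|x_{l-1}| - |x_{l-2}|$. Inserting $|x_{l-1}| \leq |x_l|/(\alpha - 2)$ and $|x_{l-2}| \leq |x_l|/(\alpha - 2)^2$ (the latter from two successive applications of the inductive hypothesis) reduces everything to the scalar inequality
\[
\alpha - \frac{\alpha}{\alpha - 2} - \frac{1}{(\alpha - 2)^2} \geq \alpha - 2,
\]
which after clearing denominators becomes $\alpha^2 - 6\alpha + 7 \geq 0$. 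The larger root of this quadratic is $3 + \sqrt{2} \approx 4.41$, so the standing hypothesis $\alpha > 5$ gives the bound with room to spare.

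The main obstacle is not conceptual but bookkeeping: one must identify exactly which prior bounds to invoke (both $|x_{l-1}|$ and $|x_{l-2}|$ controlled by $|x_l|$, the latter via a double application of the induction) and then recognise that the resulting scalar inequality is governed by $\alpha^2 - 6\alpha + 7$, whose larger root explains why the clean threshold $\alpha > 5$ (rather than something like $\alpha \geq 4$) is the right hypothesis to pick across all three cases $l = 1$, $l = 2$, and $l \geq 3$ uniformly.
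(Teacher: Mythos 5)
Your proposal is correct and follows essentially the same route as the paper: handle $l=1,2$ by hand, then a two-step induction using the reverse triangle inequality on $a(l)x_{l-2}+b(l)x_{l-1}+c(l)x_l+d(l)x_{l+1}=0$ with $|x_{l-1}|\le|x_l|/(\alpha-2)$ and $|x_{l-2}|\le|x_l|/(\alpha-2)^2$. The only cosmetic difference is that you keep the $(\alpha-2)^{-2}$ term exactly, arriving at $\alpha^2-6\alpha+7\ge 0$ (threshold $3+\sqrt2$), whereas the paper weakens it to $(\alpha-2)^{-1}$ and lands on the condition $\alpha\ge 5$; both are subsumed by the hypothesis $\alpha>5$.
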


\begin{proof}
Since $c(1)x_1 + d(1)x_2 = 0$, we have $|x_2| = \alpha|x_1| \geq (\alpha-2)|x_1|$. Since $b(2)x_1 + c(2)x_2 + d(2)x_3 = 0$, we have 
$$
|x_3| = |-c(2)x_2 - b(2)x_1| \geq \alpha|x_2| - \alpha|x_1| \geq (\alpha -1)|x_2| \geq (\alpha - 2)|x_2|.
$$
We assume that $|x_{l+1}| \geq (\alpha - 2)|x_l|$ and $|x_{l+2}| \geq (\alpha - 2)|x_{l+1}|$. Then the relation $a(l+2)x_l + b(l+2)x_{l+1}+c(l+2)x_{l+2} + d(l+2)x_{l+3}=0$ implies that
\begin{align*}
|x_{l+3}| &\geq |c(l+2)x_{l+2}| - |b(l+2)x_{l+1}| - |a(l+2)x_l|\\
&= \alpha|x_{l+2}| - \alpha|x_{l+1}| -|x_l|\\
&\geq \alpha|x_{l+2}| - \frac{\alpha}{\alpha-2}|x_{l+2}| - \frac{1}{(\alpha-2)^2}|x_{l+2}|\\
&\geq \alpha|x_{l+2}| - \frac{\alpha}{\alpha-2}|x_{l+2}| - \frac{1}{\alpha-2}|x_{l+2}|\\
&= \left(\alpha - \frac{\alpha + 1}{\alpha-2}\right)|x_{l+2}| \geq (\alpha - 2)|x_{l+2}|.
\end{align*} 
Hence, we have $|x_{l+1}| \geq (\alpha -2 )|x_l|$ \ $(l = 1, 2, \dots, k-1)$ inductively.
\end{proof}

\vskip 3mm

\begin{prp}\label{prp:$a>5$}
For $a\in \C$ with $|a|\ge 5$ and $1\le k \le n+1$ the matrix $E_n^k(a)$ is invertible, that is, all order-$n$ minors of $E_n(a)$ are nonzero.
\end{prp}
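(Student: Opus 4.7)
The plan is to argue by contradiction. Suppose there exists a nonzero $\mathbf{x} = (x_1, \dots, x_n)^t \in \C^n$ with $E_n^k(a,1)\mathbf{x} = 0$. Each row of $E_n(a,1)$ yields a linear equation in two to four consecutive unknowns with coefficients $\pm a$ and $\pm 1$, and all of these are assumed to vanish except the one coming from row $k$. A direct inspection shows that the coefficient pattern matches the hypothesis $|b(\cdot)|=|c(\cdot)|=\alpha$, $|a(\cdot)|=|d(\cdot)|=1$ of Lemma~\ref{lem:down} with $\alpha = a > 5$.

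For the main range $2 \leq k \leq n$, my plan is to apply Lemma~\ref{lem:down} to the ``top block'' of rows $1, 2, \dots, k-1$, which is a system in $x_1, \dots, x_k$ of exactly the prescribed shape, and again to the ``bottom block'' of rows $k+1, \dots, n+1$ read in reverse order under the reindexing $y_j = x_{n+1-j}$, which yields a system of the same shape in $y_1, \dots, y_{n-k+2}$, i.e.\ in $x_n, x_{n-1}, \dots, x_{k-1}$. The top block will yield $|x_{l+1}| \geq (a-2)|x_l|$ for $l = 1, \dots, k-1$ provided $x_1 \neq 0$, and the bottom block will yield $|x_{l-1}| \geq (a-2)|x_l|$ for $l = k, \dots, n$ provided $x_n \neq 0$. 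In particular, at the ``seam'' I obtain both $|x_k| \geq (a-2)|x_{k-1}|$ and $|x_{k-1}| \geq (a-2)|x_k|$, so $|x_k| \geq (a-2)^2 |x_k| > 9|x_k|$, which is impossible once $x_k \neq 0$ (and $x_k \neq 0$ follows from $x_1 \neq 0$ via the geometric growth).

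The remaining subcases within $2 \leq k \leq n$ handle vanishing of $x_1$ or $x_n$. If $x_1 = 0$, the top-block equations force $x_2 = x_3 = \dots = x_k = 0$ by immediate back-substitution, so in particular $x_{k-1} = 0$; this contradicts the lower bound on $|x_{k-1}|$ from the bottom lemma if $x_n \neq 0$, and in the remaining case $x_n = 0$ a symmetric back-substitution in the bottom block annihilates $x_{k-1}, \dots, x_n$, giving $\mathbf{x} = 0$. The subcase $x_n = 0$ with $x_1 \neq 0$ is symmetric to $x_1 = 0$ with $x_n \neq 0$.

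The edge cases $k = 1$ and $k = n+1$ are where I expect the main bookkeeping difficulty: one block is empty, and the ``short'' row at the other end (row $n$ if $k = n+1$, row $2$ if $k = 1$) has only three nonzero coefficients and does not fit the pattern of Lemma~\ref{lem:down}. The plan for $k = n+1$, say, is to apply Lemma~\ref{lem:down} only to rows $1, \dots, n-1$, deducing $|x_{n-1}| \leq |x_n|/(a-2)$ and $|x_{n-2}| \leq |x_n|/(a-2)^2$, and then to feed these into the short-row identity $x_n = x_{n-1} - x_{n-2}/a$ to obtain $|x_n| \leq \frac{|x_n|}{a-2} + \frac{|x_n|}{a(a-2)^2}$. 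This forces $1 \leq \frac{1}{a-2} + \frac{1}{a(a-2)^2}$, which is false for $a > 5$ since the right-hand side is at most $\frac{1}{3} + \frac{1}{45} < 1$. The alternative $x_1 = 0$ collapses everything to zero by straightforward induction, and $k = 1$ is handled symmetrically using the reverse indexing. The most delicate point of the whole argument is verifying, for the bottom block, that the reindexed equations genuinely satisfy the absolute-value hypotheses of Lemma~\ref{lem:down}; once that is set up, the case analysis is routine.
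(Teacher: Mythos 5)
Your proposal is correct and follows essentially the same route as the paper: both apply Lemma~\ref{lem:down} downward from $x_1$ and, after reading the rows below the deleted one in reverse order, upward from $x_n$, and then derive the contradiction $|x_k|\geq (a-2)|x_{k-1}|$ versus $|x_{k-1}|\geq (a-2)|x_k|$ at the seam, with the degenerate cases $x_1=0$ or $x_n=0$ settled by back-substitution. Your explicit handling of the boundary cases $k=1$ and $k=n+1$ (where one block is empty and the short three-term row must be combined with the geometric estimates) is a point the paper's write-up glosses over, but it is a refinement of, not a departure from, the same argument.
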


\begin{proof}
For $1\le k \le n+1$ and $\boldsymbol{x} =(x_1,x_2,\ldots ,x_n)^t  \in \C^n$, we will show that if 
$E_n^k(a)\boldsymbol{x}=\boldsymbol{0}$, then $\boldsymbol{x}= \boldsymbol{0}$.

(1) When $k=n+1$, $E_n^{n+1}(a)\boldsymbol{x} =\boldsymbol{0}$ means that
\begin{gather*}
  x_{-1}=x_0=0, \\
  -x_i + ax_{i+1}-ax_{i+2}+x_{i+3} =0 \; (i=-1,0,\ldots,n-3)   \tag{*} \\
  -x_{n-2} + a x_{n-1} - a x_n = 0  \tag{**}
\end{gather*} 
Apply Lemma~\ref{lem:down} for (*), we have
\[   |x_{i+1}| \ge (|a|-2) |x_i|, \quad i=1,2,\ldots, n-1.  \]
By (**) we have
\begin{align*}
  0 & = |-x_{n-2} + a x_{n-1} - a x_n | \ge |a| |x_n| - |a| |x_{n-1}| - |x_{n-2}|  \\
     & \ge ( |a| - \frac{|a|}{|a|-2} - \frac{1}{(|a|-2)^2} ) |x_n| \ge |x_n| .
\end{align*}
So we can get $0=x_n =x_{n-1}= \cdots =x_1$.

(2) When $k=1$, we set $\boldsymbol{y}=(x_n, x_{n-1},\ldots, x_1)^t\in \C^n$.
The equation $E_n^1(a)\boldsymbol{x}=\boldsymbol{0}$ is equivalent to 
$E_n^{n+1}(a) \boldsymbol{y}=\boldsymbol{0}$.
So we have $\boldsymbol{y}=\boldsymbol{0}$, that is $\boldsymbol{x}=\boldsymbol{0}$.

(3) When $2\le k \le n$, $E_n^k(a)\boldsymbol{x} = \boldsymbol{0}$ means that
\begin{gather*}
  x_{-1}=x_0=0, \;  x_{n+1}=x_{n+2}=0  \\
  -x_i + a x_{i+1} - a x_{i+2} + x_{i+3} = 0 \quad (i=-1,0,\ldots, k-3)  \tag{***} \\
  -x_i + a x_{i+1} - a x_{i+2} + x_{i+3} = 0 \quad (i=k-1,k,\ldots, n-1) , \tag{****} 
\end{gather*}
Apply Lemma~\ref{lem:down} for (***) and (****) like as (1) and (2), we have 

\begin{gather*}
  |x_{i+1}|\ge (|a|-2) |x_i|,  \quad  i=1,2,\ldots, k-1 \\ 
\intertext{and}
  |a_{i-1}| \ge (|a|-2) |x_i|, \quad  i=k,k+1, \ldots, n  .
\end{gather*}
Since $|x_k|\ge (|a|-2)|x_k|$ and $|x_{k-1}|\ge (|a|-2)|x_k|$, we have $x_{k-1}=x_k=0$
and $\boldsymbol{x}=\boldsymbol{0}$.
\end{proof}

\vskip 3mm

\begin{rmk}\label{rmk:$a > 5$}
Proposition~\ref{prp:$a>5$} is true even if we replace $E_n(a)$ by $\tilde{E}_n(a)$  with $|a| \ge 5$, where 
$$
\tilde{E}_n(a)
= \sum_{j=2}^n|j-1\rangle \langle j| + a \sum_{j=1}^n |j\rangle \langle j| + a \sum_{j=1}^n |j+1\rangle \langle j| + \sum_{j=1}^{n-1} |j+2\rangle \langle j| .
%\left(
%\begin{array}{ccccccc}
%a&1&0&0&0&\cdots&0\\
%a&a&1&0&0&\cdots&0\\
%1&a&a&1&0&\cdots&0\\
%0&1&a&a&1&\ddots&\vdots\\
%\vdots&\ddots&\ddots&\ddots&\ddots&\ddots&0\\
%\vdots&&\ddots&\ddots&\ddots&\ddots&1\\
%0&\cdots&\cdots&0&1&a&a\\
%0&\cdots&\cdots&\cdots&0&1&a
%\end{array}
%\right).
$$
\hfill$\qed$
\end{rmk}

\vskip 3mm

\begin{lem}\label{lem:$H_n(a)$}
Let $a$ be a complex number with $|a|\ge 2$ and set
\[  H_n(a) = a \sum_{i=1}^n |i\rangle \langle i| + \sum_{i=1}^{n-1}
      (|i\rangle \langle i+1| + |i+1\rangle \langle i|) \in \mathbb{M}_n (\mathbb{C}) . \]
Then $H_n(a)$ is invertible.
\end{lem}

\begin{proof}
We set $e_n =|H_n(a)|$.
Then the sequence $\{e_n\}_{n=1}^\infty$ satisfies
\[  e_o=1, \; e_1 =a, \; e_{n+2} = a e_{n+1} -e_n \quad (n=0,1,2,\ldots) . \]
Let $x^2-ax+1=(x-\alpha)(x-\beta)$. 
Then we can get
\[   e_n = \frac{\alpha^{n+1}-\beta^{n+1}}{\alpha -\beta} \qquad  n=0,1,2,\ldots . \]
If $\alpha^{n+1}-\beta^{n+1}=0$ for some $n$, then we have $\alpha=\beta=1$ or
$\alpha =\beta=-1$ because $|\alpha + \beta|=|a|\ge 2$ and $\alpha \beta=1$.
This implies, for $n=0,1,2,\ldots$,
\[    e_n = \begin{cases}  n+1 \; &  \text{ if } \alpha=\beta=1  \\
                                         (-1)^n(n+1)   & \text{ if } \alpha = \beta = -1 \end{cases} . \]
So we have $e_n \neq 0$, that is, $H_n(a)$ is invertible for $|a|\ge 2$.
\end{proof}

\vskip 3mm

For any $n \in \N$ and a number $a \in \C$, set $(n +2) \times n$ matrix 

$$
G_n(a)= \sum_{j=1}^n (|j\rangle + a |j+1\rangle + |j+2 \rangle) \langle j|  \in \M_{n+2,n}(\C).  
%\left(
%\begin{array}{ccccccc}
%1&0&0&0&0&\cdots&0\\
%a&1&0&0&0&\cdots&0\\
%1&a&1&0&0&\cdots&0\\
%0&1&a&1&0&\ddots&\vdots\\
%\vdots&\ddots&\ddots&\ddots&\ddots&\ddots&0\\
%\vdots&&\ddots&\ddots&\ddots&\ddots&1\\
%0&\cdots&\cdots&0&0&1&a\\
%0&\cdots&\cdots&\cdots&0&0&1
%\end{array}
%\right).
$$

From the same argument as in Proposition~\ref{prp:$a>5$} we can get the following:

\vskip3mm

\begin{prp}\label{prp:BD}
 For $1\le i <j \le n+2$ and a number $a\in \mathbb{C}$ with $|a|\ge 2$,
let $G_n^{i,j}(a)$ be a matrix which is obtained by deleting $i$-th and $j$-th rows of $G_n(a)$.
Then $G_n^{i,j}(a)$ is invertible, that is, all order-$n$ minors of $G_n(a)$ are nonzero.
\end{prp}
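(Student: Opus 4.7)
The plan is to mimic the ``propagation from both ends'' strategy of Proposition~\ref{prp:$a>5$}, but since $G_n(a,1)$ has the much simpler three-term structure, I would replace the quantitative growth estimate of Lemma~\ref{lem:down} by a direct invertibility statement for a standard symmetric tridiagonal matrix. First I would recast $G_n(a,1)^{i,j}\boldsymbol{x} = \boldsymbol{0}$, with $\boldsymbol{x} = (x_1, \dots, x_n)^t$, by extending the unknowns via $x_{-1} = x_0 = x_{n+1} = x_{n+2} = 0$. Under this extension, the $r$-th row of $G_n(a,1)$ encodes exactly the recurrence $x_{r-2} + a x_{r-1} + x_r = 0$ for $r = 1, \dots, n+2$, and removing rows $i, j$ just deletes two instances of this recurrence.

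Next I would use the ``boundary'' rows to kill coordinates from both ends. Rows $r = 1, \dots, i-1$ (all present because $i<j$) together with $x_{-1} = x_0 = 0$ force $x_1 = \dots = x_{i-1} = 0$ by induction on $r$; symmetrically, rows $r = n+2, \dots, j+1$ together with $x_{n+1} = x_{n+2} = 0$ force $x_{j-1} = \dots = x_n = 0$. When $i = 1$ or $j = n+2$ the corresponding propagation is vacuous, but the boundary zero it would have supplied is still available from the extension itself. What remains is the subsystem of rows $r = i+1, \dots, j-1$ acting on the unknowns $x_i, \dots, x_{j-2}$. After substituting the known value $x_{i-1} = 0$ in the first of these and $x_{j-1} = 0$ in the last, the subsystem becomes $T_N \boldsymbol{y} = \boldsymbol{0}$, where $N = j - i - 1$ and $T_N$ is the $N \times N$ symmetric tridiagonal matrix with $a$ on the main diagonal and $1$ on both sub- and super-diagonals.

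The final step is to show that $T_N$ is invertible for $|a| \geq 2$. Expanding along the first row yields the recurrence $D_N := \det T_N = a D_{N-1} - D_{N-2}$ with $D_0 = 1$, $D_1 = a$. A single induction on $N$ then gives $|D_N| \geq |D_{N-1}| \geq 1$ for all $N \geq 1$: the base case is $|D_1| = |a| \geq 2 > 1 = |D_0|$, and in the inductive step the reverse triangle inequality and the hypothesis yield $|D_N| \geq |a|\,|D_{N-1}| - |D_{N-2}| \geq 2|D_{N-1}| - |D_{N-1}| = |D_{N-1}|$. In particular $D_N \neq 0$, so $\boldsymbol{y} = \boldsymbol{0}$, and combining this with the earlier coordinate zeros gives $\boldsymbol{x} = \boldsymbol{0}$. (As a sanity check, $D_N$ is the Chebyshev determinant $U_N(a/2)$, giving explicitly $D_N(\pm 2) = (\pm 1)^N(N+1)$.)

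I do not anticipate a deep obstacle; the only real care is bookkeeping for the corner cases $N = 0$ (meaning $j = i+1$, where the two propagations alone already kill every coordinate), $N = 1$ (meaning $j = i+2$, so the lone remaining equation reduces to $a x_i = 0$ and needs only $a \neq 0$), and the extreme indices $i \in \{1,2\}$ or $j \in \{n+1, n+2\}$, where one must keep straight which rows are active and which unknowns remain. All of these collapse cleanly into the tridiagonal framework above.
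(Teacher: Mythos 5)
Your argument is correct, but it takes a genuinely different route from the paper's. The paper reduces everything in one sentence to the single case $G_n(a,1)^{1,n+2}$ (asserting that the deleted rows $(1,0,\dots,0)$ and $(0,\dots,0,1)$ are independent of the middle rows), and then splits into cases: for $|a|=2$ it evaluates $|G_n(a,1)^{1,n+2}|=\pm(n+1)$ explicitly, while for $|a|>2$ it reruns the two-sided growth estimate of Lemma~\ref{lem:down}, as in Proposition~\ref{prp:$a>5$}, getting a contradiction between $|x_{l+1}|\ge(|a|-1)|x_l|$ propagated from the top and the analogous inequality propagated from the bottom. You instead treat every pair $(i,j)$ directly: the padding $x_{-1}=x_0=x_{n+1}=x_{n+2}=0$ turns the retained boundary rows into forward/backward eliminations killing $x_1,\dots,x_{i-1}$ and $x_{j-1},\dots,x_n$ (with the extension supplying the needed zero when $i=1$ or $j=n+2$), and the surviving block on $x_i,\dots,x_{j-2}$ is exactly the symmetric tridiagonal matrix $T_N$, $N=j-i-1$, with diagonal $a$ and off-diagonal $1$; its invertibility for $|a|\ge 2$ follows from the recurrence $D_N=aD_{N-1}-D_{N-2}$ and a short triangle-inequality induction giving $|D_N|\ge|D_{N-1}|\ge 1$. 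What your route buys: it is uniform in $|a|\ge 2$ (no separate computation at $|a|=2$), it makes the reduction from an arbitrary $(i,j)$ fully explicit---precisely the step the paper compresses into its opening sentence---and it identifies each order-$n$ minor, up to sign, with the Chebyshev-type determinant $U_N(a/2)$. What the paper's route buys is economy, since it reuses the propagation machinery already developed for $E_n(a,1)$. Your corner cases ($N=0$, $N=1$, $i=1$, $j=n+2$) are handled correctly, and I see no gap in the proposal.
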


\begin{proof}
For $1\le i<j\le n+2$ and $\boldsymbol{x} =(x_1,x_2,\ldots,x_n)^t \in \mathbb{C}^n$,
we will show that if $G_n^{i,j}(a)\boldsymbol{x}=\boldsymbol{0}$, then 
$\boldsymbol{x}=\boldsymbol{0}$.

When $i=1, j=n+2$, $G_n^{1,n+2}(a)\boldsymbol{x}=H_n(a)\boldsymbol{x}=\boldsymbol{0}$ 
implies $\boldsymbol{x}=\boldsymbol{0}$ by Lemma~\ref{lem:$H_n(a)$}.

When $1<i<j=n+2$, $G_n^{i,n+2}(a)\boldsymbol{x}=\boldsymbol{0}$ is equivalent to
\[  J_{i-1}(a) (x_1,x_2, \ldots,x_{i-1})^t = \boldsymbol{0} \text{ and }
    H_{n-i+2}^1(a) (x_{i-1},x_i,\ldots, x_n)^t = \boldsymbol{0},  \]
where $H^1_{n-i+2}$ is obtained by deleting the 1st row of $H_{n-i+2}$.
\[  J_k(a) = \sum_{i=1}^{k-2} (|i\rangle  + a|i+1\rangle  +|i+2\rangle) \langle i| 
                 + (|k-1\rangle + a|k\rangle ) \langle k-1| + |k\rangle \langle k| .  \]
Since $J_k(a)$ is invertible, this condition is also equivalent to
\[  x_1= x_2=\cdots =x_{i-1}=0  \text{ and }
    H_{n-i+1}(a) (x_i, x_{i+1}, \ldots, x_n)^t = \boldsymbol{0}.  \]
By Lemma~\ref{lem:$H_n(a)$}, we have $\boldsymbol{x}=\boldsymbol{0}$.

When $1=i<j < n+2$,  $G_n^{1,j}(a) \boldsymbol{x} = \boldsymbol{0}$ is equivalent to
\[   G_n^{n-j+3, n+2}(a) (x_n, x_{n-1}, \ldots, x_1)^t = \boldsymbol{0}.  \]
So we have $\boldsymbol{x}=\boldsymbol{0}$.

When $1<i<j<n+2$, $G_n^{i,j}(a) \boldsymbol{x}=\boldsymbol{0}$ is equivalent to
\begin{gather*}
   J_{i-1}(a)(x_1,x_2,\ldots, x_{i-1})^t = \boldsymbol{0},  \\
   x_k + a x_{k+1} + x_{k+2} = 0 \quad (k=i-1,i,\ldots, j-1),  \\
   \text{and } J_{n-j+2}(a) (x_n,x_{n-1},\ldots,x_{j-1})^t = \boldsymbol{0},
\end{gather*}
and also equivalent to
\begin{gather*}
  x_1=x_2=\cdots =x_{i-1}= 0, \; x_{j-1}=x_j=\cdots = x_n=0,  \\
  \text{and } H_{j-i-1}(a) (x_i, x_{i+1},\ldots, x_{j-2})^t = \boldsymbol{0} .  
\end{gather*}
So we have $\boldsymbol{x}=\boldsymbol{0}$.
\end{proof}

%%%%%%%%%%%%%%%%%%%%%%%%%%%%%%%%%%%%%%%%%%%%%%%%%%%%%
\section{Subspaces of Maximal Dimension with Bounded Schmidt Rank} \label{SMDBSR}
In this section we  use the $(n + 1) \times n$ matrix $E_n(a)$ $($$a = 3$ or $|a| \geq 5$$)$ to the construction of subspaces of $\C^m \otimes \C^n$ of maximal dimension, which do not contain any nonzero vector of Schmidt rank less than 4. Moreover, using the $(n+2) \times n$ matrix $G_n(a)$ we can construct subspaces $\mathcal{T} \subset \mathcal{S}$ of $\C^m \otimes \C^n$ such that any vector in $\mathcal{T}\backslash\{\boldsymbol{0}\}$ has Schmidt rank greater  than or equal 4, but there is at least one vector with Schmidt rank $3$ in $\mathcal{S}\backslash{\mathcal{T}}$. 
\vskip 3mm

\vskip 3mm

For $n\in \mathbb{N}$ and a number $a\in \mathbb{C}$, we consider the $(n+3)\times n$
matrix $B_n(a)$ (resp. $\tilde{B}_n(a)$) as follows:
\[  B_n(a) = \sum_{i=1}^n ( |i\rangle -a |i+1 \rangle + a |i+2 \rangle -|i+3\rangle)\langle i| . \]

\[ (\hbox{resp.}\ \tilde{B}_n(a) =  \sum_{i=1}^n ( |i\rangle + a |i+1 \rangle + a |i+2 \rangle + |i+3\rangle)\langle i|.)
\]  

%For $n \in \N$ and a number $a \in \R$, we consider the $(n+3) \times n$ matrix $\textcolor{red}{B_n(a)}$ (resp. $\textcolor{red}{\tilde{B}_n(a)}$) as follows:
%$$ 
%\textcolor{red}{B_n(a)} = 
%\left(
%\begin{array}{c}
%b_1\\
%\textcolor{red}{
%E_n(a)}\\
%b_{n+3}
%\end{array}
%\right) 
%\ (\mbox{resp. } 
%\textcolor{red}{\tilde{B}_n(a)} =
%\left(
%\begin{array}{c}
%b_1\\
%\textcolor{red}{\tilde{E}_n(a)}\\
%\tilde{b}_{n+3}
%\end{array}
%\right) ),
%$$
%where $b_1 = (1,0,\dots,0)$, $b_{n+3} = (0,\dots,0,-1)$ and $\tilde{b}_{n+3}=(0,\dots,0,1)$.

\vskip 3mm

\begin{prp}\label{prp:invertible}
For $1\le i <j <k \le n+3$ and $|a|\ge 5$, let $B_n^{i,j,k}$ be a matrix which is obtained by
deleting $i$-th, $j$-th and $k$-th rows of $B_n(a)$.
Then $B_n^{i,j,k}(a) \in \mathbb{M}_n(\mathbb{C})$
is invertible.

\end{prp}

\vskip 3mm

\begin{proof}
It suffices to show that $B_n^{i,j,k}(a) \boldsymbol{x} = \boldsymbol{0}$ implies 
$\boldsymbol{x} = \boldsymbol{0}$, where $\boldsymbol{x} =(x_1,x_2,\ldots,x_n)^t$.

(1)  When $1=i<j<k =n+3$, $B_n^{1,j,n+3}(a) = E_n^{j-1}(a)$ is invertible by Proposition~\ref{prp:$a>5$}.
So $\boldsymbol{x}=\boldsymbol{0}$.

(2) When $1<i$, $i+1<j<k=n+3$, $B_n^{i,j,n+3}(a) \boldsymbol{x}=\boldsymbol{0}$ is
equivalent to 
\begin{gather*}
  x_{-2}=x_{-1}=x_0=0, \; x_{n+1}=x_{n+2}=0  \\
  \text{and } -x_{l-3}+ax_{l-2}-ax_{l-1}+x_l =0, 
\end{gather*}
where  $l \in \{1,\ldots,i-1\} \cup \{ i+1,\ldots,j-1\} \cup \{j+1,\ldots,n+2\}$.
This condition is also equivalent to
\begin{gather*}
  x_{-2}=x_{-1}=x_0=x_1=\cdots =x_{i-1}=0, \; x_{n+1}=x_{n+2}=0  \\
  \text{and } -x_{l-3}+ax_{l-2}-ax_{l-1}+x_l =0, 
\end{gather*}
where  $l \in \{ i+1,\ldots,j-1\} \cup \{j+1,\ldots,n+2\}$.
Then we have, by Lemma~\ref{lem:down},
\begin{align*}
  & |x_{l+1}| \ge (|a|-2)|x_l|, \quad l \in \{i, \ldots, j-2\}  \\
 \text{and } & |x_l| \ge (|a|-2) |x_{l+1}|, \quad l \in \{j-2,\ldots, n-1\}.
\end{align*}
Since $|x_{j-1}|\ge (|a|-2)|x_{j-2}|$ and $|x_{j-2}|\ge (|a|-2)|x_{j-1}|$,
this implies $x_{j-1}=x_{j-2}=0$ and $\boldsymbol{x}=\boldsymbol{0}$.

(3) When $1<i$, $j=i+1<k=n+3$, $B_n^{i,i+1,n+3}(a) \boldsymbol{x}=\boldsymbol{0}$ is
equivalent to 
\begin{gather*}
  x_{-2}=x_{-1}=x_0=0, \; x_{n+1}=x_{n+2}=0  \\
  \text{and } -x_{l-3}+ax_{l-2}-ax_{l-1}+x_l =0, 
\end{gather*}
where  $l \in \{1,\ldots,i-1\} \cup \{ i+2,\ldots,n+2\}$.
This condition is also equivalent to
\begin{gather*}
  x_{-2}=x_{-1}=x_0=x_1=\cdots =x_{i-1}=0, \; x_{n+1}=x_{n+2}=0  \\
  \text{and }  E_{n-i+1}^1(a) (x_i,x_{i+1}, \ldots, x_n)^t = \boldsymbol{0}. 
\end{gather*}
By Proposition~\ref{prp:$a>5$}, we have $x_i=x_{i+1}=\cdots =x_n=0$  and $\boldsymbol{x}=\boldsymbol{0}$.

(4) When $1<i<j=n+2, k=n+3$, $B_n^{i,n+2,n+3}(a) \boldsymbol{x}=\boldsymbol{0}$ is
equivalent to 
\begin{gather*}
  x_{-2}=x_{-1}=x_0=0, \; x_{n+1}=0  \\
  \text{and } -x_{l-3}+ax_{l-2}-ax_{l-1}+x_l =0, 
\end{gather*}
where  $l \in \{1,\ldots,i-1\} \cup \{ i+1,\ldots,n+1\}$.
This condition is also equivalent to
\begin{gather*}
  x_{-2}=x_{-1}=x_0=x_1=\cdots =x_{i-1}=0, \; x_{n+1}=0  \\
  \text{and }  E_{n-i+1}^{n-i+2}(a) (x_i,x_{i+1}, \ldots, x_n)^t = \boldsymbol{0}. 
\end{gather*}
By Proposition~\ref{prp:$a>5$}, we have $x_i=x_{i+1}=\cdots =x_n=0$  and $\boldsymbol{x}=\boldsymbol{0}$.

(5) When $i=1<j<k<n+3$, $B_n^{1,j,k}(a) \boldsymbol{x}=\boldsymbol{0}$ is
equivalent to 
\[  B_n^{n-k+4,n-j+4,n+3}(a) (x_n,x_{n-1},\ldots,x_1)^t = \boldsymbol{0}. \]
So $\boldsymbol{x}=\boldsymbol{0}$ by (2), (3) and (4).

(6) When $1<i, j=i+1, k=i+2<n+3$,  $B_n^{i,i+1,i+2}(a) \boldsymbol{x}=\boldsymbol{0}$ is
equivalent to 
\begin{gather*}
  x_{-2}=x_{-1}=x_0=0, \; x_{n+1}=x_{n+2}=x_{n+3}=0  \\
  \text{and } -x_{l-3}+ax_{l-2}-ax_{l-1}+x_l =0, 
\end{gather*}
where  $l \in \{1,\ldots,i-1\} \cup \{ i+3,\ldots,n+3 \}$.
So we have $x_1=x_2=\cdots=x_{i-1}=0$ and $x_i=x_{i+1}=\cdots=x_n=0$, that is,
$\boldsymbol{x}=\boldsymbol{0}$.

(7) When $1<i, i+1<j, k=j+1<n+3$, $B_n^{i,j,j+1}(a) \boldsymbol{x}=\boldsymbol{0}$ is
equivalent to 
\begin{gather*}
  x_{-2}=x_{-1}=x_0=0, \; x_{n+1}=x_{n+2}=x_{n+3}=0  \\
  \text{and } -x_{l-3}+ax_{l-2}-ax_{l-1}+x_l =0, 
\end{gather*}
where  $l \in \{1,\ldots,i-1\} \cup \{ i+1,\ldots,j-1\} \cup \{j+2,\ldots, n+3\}$.
This condition is also equivalent to
\begin{gather*}
  x_{-2}=x_{-1}=x_0=x_1=\cdots =x_{i-1}=0, \\
  x_{j-1}=x_j=\cdots=x_n=x_{n+1}=x_{n+2}=x_{n+3}=0  \\
  \text{and }  E_{j-i-1}^{j-i}(a) (x_i,x_{i+1}, \ldots, x_{j-2})^t = \boldsymbol{0}. 
\end{gather*}
By Proposition~\ref{prp:$a>5$}, we have $x_i=x_{i+1}=\cdots =x_{j-2}=0$  and $\boldsymbol{x}=\boldsymbol{0}$.

(8) When $1<i, j=i+1, j+1<k<n+3$, $B_n^{i,i+1,k}(a) \boldsymbol{x}=\boldsymbol{0}$ is
equivalent to 
\[  B_n^{n-k+4,n-i+3,n-i+4}(a) (x_n,x_{n-1},\ldots,x_1)^t = \boldsymbol{0}. \]
So $\boldsymbol{x}=\boldsymbol{0}$ by (7).

(9) When $1<i$, $i+1<j$, $j+1<k<n+3$, $B_n^{i,j,k}(a) \boldsymbol{x}=\boldsymbol{0}$ is
equivalent to 
\begin{gather*}
  x_{-2}=x_{-1}=x_0=0, \; x_{n+1}=x_{n+2}=x_{n+3}=0  \\
  \text{and } -x_{l-3}+ax_{l-2}-ax_{l-1}+x_l =0, 
\end{gather*}
where  $l \in \{1,\ldots,i-1\} \cup \{ i+1,\ldots,j-1\} \cup \{j+1,\ldots,n+3\}$.
This condition implies 
\begin{gather*}
   x_1=\cdots =x_{i-1}=0, \; x_{k+1}=\cdots =x_{n}=0, \\
   |x_l| \ge (|a|-2)|x_{l-1}| \quad l \in \{i+1, \ldots, j-1\}  \\
  \text{and }  |x_l| \ge (|a|-2) |x_{l+1}|, \quad l \in \{j-2,\ldots, k-3\}. 
\end{gather*}
Since $|x_{j-1}|\ge (|a|-2)|x_{j-2}|$ and $|x_{j-2}|\ge (|a|-2)|x_{j-1}|$,
we have $x_{j-1}=x_{j-2}=0$ and $\boldsymbol{x}=\boldsymbol{0}$.
\end{proof}

\vskip 3mm

\begin{rmk}\label{rmk:$a=3$}
We consider the following system of equations:
\begin{align*}
  -3x_1+x_2 & = 0 \\
  3x_1 -3x_2 +x_3 & = 0 \\
 -x_{l-3}+3x_{l-2}-3x_{l-1}+x_l & =0 \quad (l=4,5,\ldots,n).
\end{align*}
Then we have
\[  x_n = \frac{n(n+1)}{2}x_1 .  \]
This means that, if $x_1\neq 0$, then $|x_1|<|x_2|<\cdots < |x_n|$.
Applying this fact for (1) and (9) in the proof of Proposition~\ref{prp:invertible}, we can show that
$B_n^{i,j,k}(3)$ is invertible for $1\le i <j <k \le n+3$.
\end{rmk}

\vskip 3mm

\begin{prp}\label{prp:invertible_tilde}
For $1 \leq i < j < k \leq n+3$ and $|a| \geq 5$ let $\tilde{B}_n^{i,j,k}(a)$ be a matrix which is obtained by deleting the $i, j, k$-th rows of $\tilde{B}_n(a)$.
Then $|\tilde{B}_n^{i,j,k}(a)| \not=0$.
\end{prp}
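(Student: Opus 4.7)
The plan is to mirror the proof of Proposition~\ref{prp:invertible} essentially verbatim, with Remark~\ref{rmk:$a > 5$} substituting for Example~\ref{exa:$(3,1)$} and Proposition~\ref{prp:$a>5$}. The only property of $\tilde{E}_n(a,1)$ used in that earlier argument is the invertibility of each $\tilde{E}_n^{\ell}(a,1)$ for $1 \leq \ell \leq n+1$, and this is exactly what Remark~\ref{rmk:$a > 5$} guarantees under the hypothesis $|a|>5$.

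First, I would split the argument into cases according to whether the deleted index set $\{i,j,k\}$ contains $1$ and/or $n+3$, that is, whether the boundary rows $b_1$ and $\tilde{b}_{n+3}$ are among the deleted rows. If $i=1$ and $k=n+3$, the remaining matrix is exactly $\tilde{E}_n^{j-1}(a,1)$, which is invertible by Remark~\ref{rmk:$a > 5$}. If precisely one of the boundary rows is deleted, say $i=1$ and $k\leq n+2$, then the matrix consists of the $n-1$ rows of $\tilde{E}_n^{j-1,k-1}(a,1)$ (which are linearly independent as a subset of the $n$ rows of the invertible $\tilde{E}_n^{j-1}(a,1)$) together with the single boundary row $\tilde{b}_{n+3}$; the case $i\geq 2$, $k=n+3$ is symmetric. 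If neither boundary row is deleted, that is $2 \leq i < j < k \leq n+2$, the matrix has $b_1$ on top, the $n-2$ linearly independent rows of $\tilde{E}_n^{i-1,j-1,k-1}(a,1)$ in the middle, and $\tilde{b}_{n+3}$ on the bottom.

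The only real obstacle, as already in the proof of Proposition~\ref{prp:invertible}, is to verify the step there labeled ``obvious'': namely, that prepending $b_1=(1,0,\ldots,0)$ and/or appending $\tilde{b}_{n+3}=(0,\ldots,0,1)$ to a linearly independent collection of rows of $\tilde{E}_n(a,1)$ again yields a linearly independent collection. I would dispatch this by using $b_1$ and $\tilde{b}_{n+3}$ as pivots in Gaussian elimination to clear column~$1$ in the (at most three) top rows of $\tilde{E}_n(a,1)$ and column~$n$ in the (at most three) bottom rows; the determinant then reduces, up to sign, to a minor of a banded matrix of the same form as $\tilde{E}_{n-2}(a,1)$, whose non-vanishing follows from a further application of Remark~\ref{rmk:$a > 5$}. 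Apart from this reduction the argument is purely bookkeeping on the case division above.
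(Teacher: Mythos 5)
Your proposal is correct and follows essentially the paper's own route: the paper proves Proposition~\ref{prp:invertible_tilde} in one line, by the same case analysis as in Proposition~\ref{prp:invertible} with Remark~\ref{rmk:$a > 5$} replacing Example~\ref{exa:$(3,1)$} and Proposition~\ref{prp:$a>5$}, which is exactly what you do. Your added justification of the step the paper calls ``obvious'' is in the right spirit, but note that when all three deleted rows are interior, a single clearing of the first and last columns leaves again a matrix of $\tilde{B}$-type (boundary rows plus a banded block with rows removed) rather than one directly covered by Remark~\ref{rmk:$a > 5$}, so the peeling must be iterated (or phrased as an induction) until only one deleted row remains in the banded block, at which point Remark~\ref{rmk:$a > 5$} applies.
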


\begin{proof}
It follows from the same argument as in Proposition~\ref{prp:invertible} using Remark~\ref{rmk:$a > 5$}.
\end{proof}

\vskip 3mm

\begin{cor}\label{cor:invertible}
For $a = 3$ or $|a| \geq 5$, the columns of $B_n(a)$ are linearly independent such that any nonzero linear combination of these columns has at least $4$ nonzero entries.
\end{cor}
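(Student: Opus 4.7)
The plan is to prove the contrapositive of the support statement, which will deliver linear independence for free. More precisely, I will show: \emph{if $\boldsymbol{x} = (x_1, \dots, x_n)^t \in \C^n$ is nonzero, then $B_n(a,1)\boldsymbol{x} \in \C^{n+3}$ has at least four nonzero entries.} Linear independence of the columns is the special case where $B_n(a,1)\boldsymbol{x} = \boldsymbol{0}$, since the zero vector has zero nonzero entries.

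First I would argue by contradiction: suppose some nonzero $\boldsymbol{x}$ produces $\boldsymbol{y} := B_n(a,1)\boldsymbol{x}$ with at most three nonzero coordinates. Choose indices $1 \leq i < j < k \leq n+3$ with the property that every coordinate of $\boldsymbol{y}$ with index outside $\{i,j,k\}$ vanishes; if $\boldsymbol{y}$ has strictly fewer than three nonzero coordinates, simply enlarge the chosen set of indices arbitrarily to three. Deleting the rows indexed by $i, j, k$ from the identity $B_n(a,1)\boldsymbol{x} = \boldsymbol{y}$ then produces the square $n \times n$ system $B_n(a,1)^{i,j,k}\boldsymbol{x} = \boldsymbol{0}$.

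The second step is to apply Proposition~\ref{prp:invertible}, which states precisely that for $a = 3$ or $a > 5$ and every triple $1 \leq i < j < k \leq n+3$, the matrix $B_n(a,1)^{i,j,k}$ is invertible. Consequently $\boldsymbol{x} = \boldsymbol{0}$, contradicting the assumption and finishing the proof.

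There is no real obstacle here beyond the bookkeeping of the pigeonhole reduction: all of the technical work has already been absorbed into Proposition~\ref{prp:invertible}, whose invertibility conclusion is exactly what is needed to translate ``few nonzero entries in the image'' into ``few linear constraints on $\boldsymbol{x}$'' and then into the vanishing of $\boldsymbol{x}$ itself.
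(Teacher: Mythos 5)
Your proposal is correct and follows essentially the same route as the paper: assume a nonzero combination has at most three nonzero entries, delete the corresponding three rows to get $B_n(a,1)^{i,j,k}\boldsymbol{x} = \boldsymbol{0}$, and invoke Proposition~\ref{prp:invertible} to force $\boldsymbol{x} = \boldsymbol{0}$, with linear independence as the degenerate case. Nothing is missing.
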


\begin{proof}
Suppose that there are $\lambda_1, \lambda_2, \dots, \lambda_n \in \C$ such that $\sum\limits_{i=1}^n\lambda_ie_i = (x_1, x_2, \dots, x_{n+3})^t$ has less than $4$ nonzero entries, where $e_i$ \ $(1\leq i \leq n)$ are columns in $B_{n}(a)$ and $x_i \in \C$ \ $(1 \leq i \leq n+3)$. Assume that for distinct elements $l,j$ and $k$ in $\{1,2,\ldots, n+3 \},$ we have $x_i = 0$ for all $i \in \{1,2,\ldots, n+3 \}\backslash \{l, j, k\}.$ Then
we have 
$$
B_n^{l,j,k}(a)
\left(\begin{array}{c}
\lambda_1\\
\vdots \\
\lambda_n
\end{array}
\right)
=
\left(\begin{array}{c}
0\\
\vdots\\
0
\end{array}
\right).
$$
Since $B_n^{i,j,k}(a)$ is invertible by Proposition~\ref{prp:invertible} and Remark~\ref{rmk:$a=3$}, we get 
each $\lambda_i = 0$ $(1 \leq i \leq n)$, that is, $\sum\limits_{i=1}^n \lambda_ie_i = (x_1, x_2, \dots, x_{n+3})^t = \boldsymbol{0}$. This is a contradiction.
\end{proof}

\vskip 3mm

\begin{cor}\label{cor:$a_geq_5$}
For $a=3$ or $|a| \geq 5$ the columns of $\tilde{B}_n(a)$ are linearly independent such that any nonzero linear combination of these columns has at least $4$ nonzero entries.
\end{cor}

\begin{proof}
When $|a| \geq 5$, it follows from Proposition~\ref{prp:invertible_tilde} and the same argument as in Corollary~\ref{cor:invertible}.

When $a=3$, the matrix $\tilde{B}_{n}(3)$ implies the following system of equations:
\begin{align*}
  3x_1+x_2 & = 0 \\
  3x_1 +3x_2 +x_3 & = 0 \\
 x_{l-3}+3x_{l-2}+3x_{l-1}+x_l & =0 \quad (l=4,5,\ldots,n).
\end{align*}
Then we have
\[  x_n = (-1)^{n}\frac{n(n+1)}{2}x_1   \]
and $|x_{1}| <|x_{2}| < \cdots <|x_{n}|$ if $x_{1}\neq 0$.
By the similar reason as Remark~\ref{rmk:$a=3$}, we can show  that
$\tilde{B}_n^{i,j,k}(3)$ is invertible for $1\le i <j <k \le n+3$.
\end{proof}

\vskip 3mm

\begin{thm}\label{thm:subspace}
Let $m$ and $n$ be natural numbers such that $4 \leq \min\{m,n\}$. 
Let $N = n + m -2$, and $\{|e_i\rangle\}_{i=1}^{m}$ (resp. $\{|f_j\rangle\}_{j=1}^{n}$) be the canonical basis for $\C^m$ (resp. $\C^n$). For $3 \leq d \leq N -3$ define 
\begin{align*}
\mathcal{S}^{(d)} = \mathrm{span}\{|e_{i-1}\rangle \otimes |f_{j+2}\rangle &-a|e_{i}\rangle \otimes |f_{j+1}\rangle + a|e_{i+1}\rangle \otimes |f_{j}\rangle - |e_{i+2}\rangle \otimes |f_{j-1}\rangle \colon \\
&2 \leq i \leq m-2, 2 \leq j \leq n-2, i + j = d +1\},\\
\mathcal{S}^{(0)} = \mathcal{S}^{(1)} = \mathcal{S}^{(2)} &= \mathcal{S}^{(N-2)} = \mathcal{S}^{(N-1)} = \mathcal{S}^{(N)} = \{0\}
\end{align*}
and $\mathcal{S} =  \bigoplus\limits_{d=0}^N\mathcal{S}^{(d)}$. 
If $a = 3$ or $|a| \geq 5$, then $\mathcal{S}$ does  not contain any nonzero vector of Schmidt rank $\leq 3$ and $\dim \mathcal{S} = (m - 3)(n - 3)$. 
\end{thm}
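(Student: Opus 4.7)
The plan is to split the statement into two independent claims --- the dimension equality $\dim\mathcal{S}=(m-3)(n-3)$ and the lower bound on Schmidt rank --- both of which reduce to the two conclusions of Corollary~\ref{cor:invertible} about the columns of $B_{n}(a,1)$.

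First I would observe that each $\mathcal{S}^{(d)}$ lies inside the ``anti-diagonal'' subspace $W^{(d)}=\mathrm{span}\{|e_s\rangle\otimes|f_{d-s}\rangle\}$, and distinct $W^{(d)}$ are pairwise orthogonal, so $\mathcal{S}=\bigoplus_{d}\mathcal{S}^{(d)}$ as an internal direct sum. Next I would check that writing a spanning vector of $\mathcal{S}^{(d)}$ (indexed by a pair $(i,j)$ with $i+j=d+1$) in the anti-diagonal basis yields a coefficient vector with the four entries $(1,-a,a,-1)$ at consecutive positions $s=i-2,i-1,i,i+1$; as $(i,j)$ runs over the admissible pairs, this is precisely (after a shift of indices) the column structure of $B_{n_d}(a,1)$, where $n_d$ is the number of valid pairs with $i+j=d+1$. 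Corollary~\ref{cor:invertible} then gives $\dim\mathcal{S}^{(d)}=n_d$, and summing over $d$ gives $\dim\mathcal{S}=\sum_d n_d=(m-3)(n-3)$, since every pair $(i,j)$ with $2\le i\le m-2$ and $2\le j\le n-2$ contributes exactly once (to the grade $d=i+j-1$, which automatically satisfies $3\le d\le N-3$).

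For the Schmidt-rank lower bound I argue by contradiction. Suppose $v\in\mathcal{S}\setminus\{0\}$ satisfies $SR(v)\le 3$, i.e., the $m\times n$ coefficient matrix $C=(c_{st})$ of $v$ has rank at most $3$. Decompose $v=\sum_{d}v_d$ with $v_d\in\mathcal{S}^{(d)}$ and let $d_0=\min\{d:v_d\ne 0\}$; then $c_{st}=0$ whenever $s+t<d_0$. By the second conclusion of Corollary~\ref{cor:invertible} applied to the non-zero combination $v_{d_0}$, the $d_0$-th anti-diagonal of $C$ carries at least four non-zero entries, say at $(s_1,d_0-s_1),\ldots,(s_4,d_0-s_4)$ with $s_1<s_2<s_3<s_4$.

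The crux is to extract a $4\times 4$ submatrix $C'$ whose determinant is manifestly non-zero. I would take the rows $s_1<s_2<s_3<s_4$ and the columns $d_0-s_4<d_0-s_3<d_0-s_2<d_0-s_1$ of $C$. The $(i,k)$-entry of $C'$ lies on the anti-diagonal $s_i+(d_0-s_{5-k})=d_0+(s_i-s_{5-k})$ of $C$; when $i+k<5$ one has $s_i<s_{5-k}$, so this anti-diagonal is strictly less than $d_0$ and the entry vanishes, while when $i+k=5$ it equals $d_0$ and the entry is one of the four non-zero values. Hence $C'$ is zero strictly above its own secondary diagonal and non-zero on that diagonal, so $\det C'=\pm\prod_{i=1}^{4}c_{s_i,d_0-s_i}\ne 0$, giving $\mathrm{rank}(C)\ge 4$ and contradicting $SR(v)\le 3$. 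The main conceptual obstacle is that Corollary~\ref{cor:invertible} alone only forces four non-zero entries on a single anti-diagonal, which for an arbitrary matrix need not produce rank at least $4$; the minimality of $d_0$ is exactly what supplies the zero pattern above the chosen anti-diagonal of $C'$ and turns those four non-zero entries into a genuine rank certificate.
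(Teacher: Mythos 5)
Your proposal is correct and takes essentially the same route as the paper: identify vectors with their $m\times n$ coefficient matrices, recognize the anti-diagonal coefficient vectors of the generators of $\mathcal{S}^{(d)}$ as the columns of $B_{n_d}(a,1)$, and use Corollary~\ref{cor:invertible} both for linear independence (giving the dimension count) and for the four nonzero entries on an extremal nonzero anti-diagonal, which produce a $4\times 4$ submatrix with nonzero determinant. The only cosmetic differences are that you work with the minimal nonzero grade $d_0$ (zeros above the chosen anti-diagonal) where the paper uses the maximal grade $\kappa$ (zeros below), and you count $\dim \mathcal{S}$ via the bijection with pairs $(i,j)$ rather than summing $|k|-3$; these are equivalent.
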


\begin{proof}
Let $\phi\colon \C^m \otimes \C^n \rightarrow M_{m\times n}(\C)$ be the linear isomorphism defined by $\phi(|\eta\rangle) = [c_{ij}]$ for each $|\eta\rangle = \sum_{i,j}c_{ij}|e_i\rangle \otimes |f_j\rangle  \in \C^m \otimes \C^n$.  Then $|\eta\rangle$ has Schmidt rank at least $r$ if and only if the corresponding matrix $[c_{ij}]$ is of rank at least $r$. 
Also, it is known that a matrix has rank
at least $r$ if and only if it has a nonzero minor of order $r$. Thus,
it is enough to construct a set
of $(m-3)(n-3)$ linearly independent matrices, which are image under $\phi$ of a basis of $\mathcal{S},$ such that
any nonzero linear combination of these
matrices has a nonzero minor of order 4.

Label the anti-diagonals of any $m \times n$ matrix by non-negative integers $k,$ such that the first
anti-diagonal from upper left (of length one) is labeled $k=0$  and value of $k$ increases from upper
left to lower right. Let the length of the $k$-th anti-diagonal be denoted by

 $[k]$. 
Note that if $m \leq n$, then
$$
[k] = \left\{\begin{array}{ccc}
k+1&\hbox{for}&k \in \{3, 4, \dots, m-2\}\\
% |m-1| = 
m&\hbox{for}&k \in \{m-1, m, \dots, n-1\}\\
n + m - k - 1&\hbox{for} & k \in \{n, n+1, \dots, N-3 = n + m - 5\}
\end{array}
\right.
$$

 Recall that for $3 \leq k \leq N-3$, $\mathcal{S}^{(k)}$ is generated by the set
\begin{align*}
 B_k=\{|e_{i-1}\rangle \otimes |f_{j+2}\rangle &-a|e_{i}\rangle \otimes |f_{j+1}\rangle + a|e_{i+1}\rangle \otimes |f_{j}\rangle - |e_{i+2}\rangle \otimes |f_{j-1}\rangle \colon \\
&2 \leq i \leq m-2, 2 \leq j \leq n-2, i + j = k + 1\}.
\end{align*}

 Note that any
element of $\phi(B_k)$ is the matrix obtained from the $m \times n$ zero matrix by
replacing the $k$-th anti-diagonal
by $(0, \dots, 0, -1, a, -a, 1, 0, \dots, 0).$ 
For  $3 \leq k \leq N-3$, from Corollary~\ref{cor:invertible}, $\phi(B_k)$
is a set of $[k]-3$ linearly independent matrices.  Also, by Corollary~\ref{cor:invertible} it follows that if $M$
is a matrix obtained by taking any nonzero linear combination of
matrices from $\phi(B_k),$ then $M$ has at least $4$ nonzero entries in the $k$-th
anti-diagonal and the entries of $M$,  other than  those on $k$-th anti-diagonal, are zeros. Since the
determinant of the $4 \times 4$ submatrix with these $4$ nonzero elements in its principal
anti-diagonal is clearly nonzero, any linear combination of the matrices in $\phi(B_k)$ has at least
one nonzero order-$4$ minor, thus has rank at least $4.$

Let $B= \displaystyle \bigcup_{k=3}^{N-3} B_k$. Since elements from different 
$\phi(B_k)$ have different
nonzero anti-diagonal, $\phi(B)$ is linearly independent. Thus $B$
is a basis for $\mathcal{S}.$ Let $C$ be a matrix obtained by an arbitrary
linear combination from the elements of $\phi(B)$. Let $\kappa$ be the largest $k$ for which the linear
combination involves an element from $\phi(B_k)$. The $\kappa$-th anti-diagonal of $C$ has at least
$4$ nonzero elements. Because $\kappa$ labels the bottom-rightmost anti-diagonal of $C$ that contains nonzero
elements, the $4 \times 4$ submatrix of $C,$ with these 4 nonzero elements in
the principal anti-diagonal, has only zero entries in all its anti-diagonals which are below the principal
anti-diagonal. Hence  the $4 \times 4$ submatrix has nonzero determinant. Thus, the rank of $C$ is at least 4. We
conclude that  $\mathcal{S}$  does not contain any nonzero vector of Schmidt rank $\leq 3.$

The dimension of $\mathcal{S}$ is equal to the cardinality of $B$. 
Hence, the dimension of $\mathcal{S}$ is given by

\begin{align*}
 %|B| &= \sum_{k=3}^{N-3} |B_k|\\
\sum_{k=3}^{N-3} ([k]-3)
% &= 
 &=\sum_{k=3}^{m-2}(k-2) + \sum_{k=m-1}^{n-1}(m - 3) + \sum_{k=n}^{n+m-5}(n + m - k - 4)\\
 &= 2\sum_{k=3}^{m-2}(k - 2) + (m -3)(n - m +1)\\
 &= (m - 3)(n - 3).
\end{align*} 
A similar argument holds in the case $m > n$.
%Thus, $|B|$ is same as the number of entries in an $(m-3) \times (n-3)$ matrix, i.e., $|B|=  (m-3)(n-3).$ 
\end{proof}

\vskip 3mm

\begin{rmk}
From the general fact on Schmidt rank it follows that 
% for $a=3$  or $\textcolor{red}{|a| \geq 5}$, 
all the elements of the basis 
\begin{align*}
 B=\bigcup_{d=3}^{N-3} \{|e_{i-1}\rangle \otimes |f_{j+2}\rangle &-a|e_{i}\rangle \otimes |f_{j+1}\rangle + a|e_{i+1}\rangle \otimes |f_{j}\rangle - |e_{i+2}\rangle \otimes |f_{j-1}\rangle \colon \\
&2 \leq i \leq m-2, 2 \leq j \leq n-2, i + j = d + 1\},
\end{align*}
of $\mathcal{S}$ have Schmidt rank $4$ if and only if $a\neq 0$.
\end{rmk}

\vskip 3mm

\begin{rmk}\label{rmk:subspace}
Theorem~\ref{thm:subspace} is true even if we replace $\mathcal{S}$ by $\tilde{\mathcal{S}}$ with $a=3$ or $|a| \geq 5$ as follows:

\begin{align*}
\mathcal{\tilde{S}}^{(d)} = \mathrm{span}\{|e_{i-1}\rangle \otimes |f_{j+2}\rangle &+a|e_{i}\rangle \otimes |f_{j+1}\rangle + a|e_{i+1}\rangle \otimes |f_{j}\rangle + |e_{i+2}\rangle \otimes |f_{j-1}\rangle \colon \\
&2 \leq i \leq m-2, 2 \leq j \leq n-2, i + j = d + 1\},\\
\mathcal{\tilde{S}}^{(0)} = \mathcal{\tilde{S}}^{(1)} = \mathcal{\tilde{S}}^{(2)} &= \mathcal{\tilde{S}}^{(N-2)} = \mathcal{\tilde{S}}^{(N-1)} = \mathcal{\tilde{S}}^{(N)} = \{0\}
\end{align*}
and $\mathcal{\tilde{S}} =  \bigoplus\limits_{d=0}^N\mathcal{\tilde{S}}^{(d)}$. 
Note that it follows from Corollary~\ref{cor:$a_geq_5$}.
\end{rmk}

\vskip 3mm

\begin{thm}\label{thm:subspaces}
Let $m$ and $n$ be natural numbers such that $4 \leq \min\{m,n\}$. 
Let $N = n + m -2$, and $\{|e_i\rangle\}_{i=1}^{m}$ (resp. $\{|f_j\rangle\}_{j=1}^{n}$) be the canonical basis for $\C^m$ (resp. $\C^n$). For $3 \leq d \leq N -3$ define 
\begin{align*}
\mathcal{T}^{(d)} = \mathrm{span}\{|e_{i-1}\rangle \otimes |f_{j+2}\rangle &+(a + 1)|e_{i}\rangle \otimes |f_j\rangle + (a + 1)|e_{i+1}\rangle \otimes |f_{j}\rangle + |e_{i+2}\rangle \otimes |f_{j-1}\rangle \colon \\
&2 \leq i \leq m-2, 2 \leq j \leq n-2, i + j = d + 1\},\\
\mathcal{T}^{(0)} = \mathcal{T}^{(1)} = \mathcal{T}^{(2)} &= \mathcal{T}^{(N-2)} = \mathcal{T}^{(N-1)} = \mathcal{T}^{(N)} = \{0\}
\end{align*}
and $\mathcal{T} =  \bigoplus\limits_{d=0}^N\mathcal{T}^{(d)}$. Similarly, for $2 \leq d \leq N-2$ define
\begin{align*}
\mathcal{S}^{(d)} = \mathrm{span}\{|e_{i}\rangle \otimes |f_{j+2}\rangle &+a|e_{i+1}\rangle \otimes |f_{j+1}\rangle + |e_{i+2}\rangle \otimes |f_{j}\rangle \colon \\
&1 \leq i \leq m-2, 1\leq j \leq n-2, i + j = d\},\\
\mathcal{S}^{(0)} = \mathcal{S}^{(1)} &= \mathcal{S}^{(N-1)} = \mathcal{S}^{(N)} = \{0\}
\end{align*}
and $\mathcal{S} =  \bigoplus\limits_{d=0}^N\mathcal{S}^{(d)}$. Then, 
\begin{enumerate}
\item $\mathcal{T} \subset \mathcal{S}$.
\item
If $|a| \geq 2$, then $\mathcal{S}$ does not contain any nonzero vector of Schmidt rank $\leq 2$ and $\dim \mathcal{S} = (m -2)(n - 2)$.
\item
If $a > 4$, %\textcolor{magenta}{($|a+1|>5$ would give more examples)}, 
then $\mathcal{T}$ does  not contain any nonzero vector of Schmidt rank $\leq 3$ and $\dim \mathcal{T} = (m - 3)(n - 3)$. 
\item
If $a > 4$, then there is at least one vector with Schmidt rank $3$ in $\mathcal{S}\backslash{\mathcal{T}}$.
\end{enumerate}

\end{thm}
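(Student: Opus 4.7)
The plan is to treat the four statements separately. Part~(1) is a direct algebraic identity, part~(3) reduces to Remark~\ref{rmk:subspace} by shifting the parameter $a$, part~(2) adapts the argument of Theorem~\ref{thm:subspace} with $G_n(a,1)$ and Proposition~\ref{prp:BD} in place of $B_n(a,1)$ and Proposition~\ref{prp:invertible}, and part~(4) exhibits an explicit Schmidt-rank-$3$ vector in $\mathcal{S}$ not lying in $\mathcal{T}$.

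For~(1), I would verify directly that, for each $(i,j)$ with $i+j = d+1$, $2 \leq i \leq m-2$, $2 \leq j \leq n-2$, the generator of $\mathcal{T}^{(d)}$ indexed by $(i,j)$ equals the sum of the two $\mathcal{S}^{(d)}$-generators centered at $(i-1, j)$ and $(i, j-1)$. Both centers lie in the range $1 \leq i' \leq m-2$, $1 \leq j' \leq n-2$, and adding the two $(1, a, 1)$ patterns produces the $(1, a+1, a+1, 1)$ pattern of the $\mathcal{T}^{(d)}$-generator by overlap on the two middle positions. Hence $\mathcal{T}^{(d)} \subset \mathcal{S}^{(d)}$ for every $d$, giving $\mathcal{T} \subset \mathcal{S}$. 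For~(3), observe that the $\mathcal{T}^{(d)}$-generator has coefficient pattern $(1, a+1, a+1, 1)$, which is exactly the pattern of the $\tilde{\mathcal{S}}^{(d)}$-generator of Remark~\ref{rmk:subspace} under the substitution $a' := a+1$. Since $a > 4$ forces $a' > 5$, Remark~\ref{rmk:subspace} applies verbatim and yields both the Schmidt-rank bound and $\dim \mathcal{T} = (m-3)(n-3)$.

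For~(2), I would first extract from Proposition~\ref{prp:BD} the analog of Corollary~\ref{cor:invertible}: for $|a| \geq 2$, any nonzero linear combination of the columns of $G_n(a,1)$ has at least three nonzero entries. Indeed, two or fewer nonzero entries would give $G_n(a,1)^{i,j} \boldsymbol{\lambda} = \boldsymbol{0}$ for some pair $(i,j)$, forcing $\boldsymbol{\lambda} = \boldsymbol{0}$ by invertibility of $G_n(a,1)^{i,j}$. Next, for each anti-diagonal $d$ of an $m \times n$ matrix, the matrix whose columns are the $\phi$-images of the $\mathcal{S}^{(d)}$-generators restricted to that anti-diagonal is a submatrix of $G_k(a,1)$ for a suitable $k$, so every nonzero element of $\mathcal{S}^{(d)}$ maps to a matrix having at least three nonzero entries on anti-diagonal $d$ and zeros elsewhere. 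The rank conclusion then follows the endgame of Theorem~\ref{thm:subspace}: for arbitrary nonzero $v = \sum_d v_d \in \mathcal{S}$, with maximal $\kappa$ such that $v_\kappa \neq 0$, the $3 \times 3$ submatrix of $\phi(v)$ whose principal anti-diagonal contains three nonzero entries of $v_\kappa$ has zero entries on every anti-diagonal below it, hence nonzero determinant, so $\phi(v)$ has rank at least $3$. The dimension $(m-2)(n-2)$ is then just the count of generator pairs $(i,j)$ with $1 \leq i \leq m-2$, $1 \leq j \leq n-2$, noting that distinct $\mathcal{S}^{(d)}$ occupy disjoint anti-diagonals and are therefore independent.

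For~(4), pick any single generator $v$ of $\mathcal{S}^{(d)}$, for instance the one indexed by $(i,j) = (1,1)$. The image $\phi(v)$ has exactly three nonzero entries in three distinct rows and three distinct columns, and the corresponding $3 \times 3$ submatrix carries the pattern $(1, a, 1)$ on its principal anti-diagonal with zeros elsewhere, hence determinant $-a \neq 0$. Therefore $\phi(v)$ has rank exactly $3$ and $v$ has Schmidt rank $3$. Part~(3) guarantees that every nonzero vector of $\mathcal{T}$ has Schmidt rank $\geq 4$, so $v \in \mathcal{S} \setminus \mathcal{T}$. I expect the main obstacle to sit in part~(2): matching the $\mathcal{S}^{(d)}$-generator matrix to (a submatrix of) $G_k(a,1)$ for the appropriate $k$ requires a mild case split depending on whether $d$ is small enough that the anti-diagonal meets the top-left corner of $\C^m \otimes \C^n$, large enough to meet the bottom-right corner, or spans the full short side in the central range.
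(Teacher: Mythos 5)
Your proposal is correct and follows essentially the same route as the paper: (1) by writing each $\mathcal{T}^{(d)}$-generator as a sum of two overlapping $\mathcal{S}^{(d)}$-generators, (3) via Remark~\ref{rmk:subspace} with $a+1>5$, (2) by running the Theorem~\ref{thm:subspace} argument with $G_n(a,1)$ and Proposition~\ref{prp:BD}, and (4) by noting a generator of $\mathcal{S}^{(d)}$ has Schmidt rank exactly $3$. Your extra detail in (2) and (4) only fills in what the paper leaves implicit, and the case split you anticipate in (2) is in fact unnecessary, since the generator matrix on each anti-diagonal of length $L$ is exactly $G_{L-2}(a,1)$.
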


\begin{proof}

(1) For any $|e_{i-1}\rangle \otimes |f_{j+2}\rangle + (a + 1)|e_{i}\rangle \otimes |f_{i}\rangle + (a + 1)|e_{i+1}\rangle \otimes |f_{j}\rangle + |e_{i+2}\rangle \otimes |f_{j-1}\rangle \in \mathcal{T}^{(d)}$ 
%$(i + j = d+1)$ 
we take $|e_{i}\rangle \otimes |f_{j+1}\rangle + a|e_{i+1}\rangle \otimes |f_{j}\rangle + |e_{i+2}\rangle \otimes |f_{j-1}\rangle = |e_{i}\rangle \otimes |f_{(j-1)+2}\rangle + a|e_{i+1}\rangle \otimes |f_{(j-1)+1}\rangle + |e_{i+2}\rangle \otimes |f_{j-1}\rangle$ 
%$(i + (j - 1) = d+1-1 = d)$, 
$|e_{i-1}\rangle \otimes |f_{j+2}\rangle + a|e_{i}\rangle \otimes |f_{j+1}\rangle + |e_{i+1}\rangle \otimes |f_{j}\rangle 
= |e_{i-1}\rangle \otimes |f_{j+2}\rangle + a|e_{(i-1)+1}\rangle \otimes |f_{j+1}\rangle + |e_{(i-1)+2}\rangle \otimes |f_{j}\rangle $
%((i -1) + j = d + 1 - 1 = d$)
$\in \mathcal{S}^{(d)}$. 
Then
\begin{align*}
&|e_{i}\rangle \otimes |f_{j+1}\rangle + a|e_{i+1}\rangle \otimes |f_{j}\rangle + |e_{i+2}\rangle \otimes |f_{j-1}\rangle\\ 
&+ 
|e_{i-1}\rangle \otimes |f_{j+2}\rangle + a|e_{i}\rangle \otimes |f_{j+1}\rangle + |e_{i+1}\rangle \otimes |f_{j}\rangle\\
&= |e_{i-1}\rangle \otimes |f_{dj+2}\rangle + (a + 1)|e_{i}\rangle \otimes |f_{j+1}\rangle + (a + 1)|e_{i+1}\rangle \otimes |f_{j}\rangle + |e_{i+2}\rangle \otimes |f_{j-1}\rangle.
\end{align*}
Hence, $\mathcal{T}^{(d)} \subset \mathcal{S}^{(d)}$, and $\mathcal{T} \subset \mathcal{S}$.

(2) Using Proposition~\ref{prp:BD} 

we know that the columns of $G_n(a)$ are linearly independent such that any nonzero linear combination of these columns has at least 3 nonzero entries. Let 
\begin{align*}
C_k = \{|e_{i}\rangle \otimes |f_{j+2}\rangle &+a|e_{i+1}\rangle \otimes |f_{j+1}\rangle + |e_{i+2}\rangle \otimes |f_{j}\rangle \colon \\
&1 \leq i \leq m-2, 1\leq j \leq n-2, i + j = k\}.
\end{align*} 
Then any element of $\phi(C_k)$ is the matrix by replacing the $k$-th anti-diagonal by $(0,\dots,0,1,a,1,0,\dots,0)$. For $2 \leq k \leq N-2$, from the above observation, $\phi(C_k)$ is the set of $[k]-2$ linearly independent, where as in the proof of Theorem~\ref{thm:subspace}, if $m \leq n$, then
$$
[k] = \left\{\begin{array}{ccc}
k+1&\hbox{for}&k \in \{2, 3, \dots, m-2\}\\
%|m-1| = 
m&\hbox{for}&k \in \{m-1, m, \dots, n-1\}\\
n + m - k - 1&\hbox{for} & k \in \{n, n+1, \dots, N-3 = n + m - 4\}
\end{array}.
\right.
$$
Also, it follows that if $M$ is a matrix obtained by taking any nonzero linear combinations of matrices from $\phi(C_k)$, then $M$ has at least 3 nonzero entries in the $k$-th anti-diagonal, 
and the entries of $M$, other than those on $k$-th anti-diagonal are zeros.  Since the determinant of the $3 \times 3$ submatrix with these $3$ nonzero entries in its principal ant-diagonal is clearly nonzero, any linear combination of the matrices in $\phi(C_k)$ has at least one nonzero order-$3$ minor, thus has rank at least $3$. 

Hence the dimension of $\mathcal{S}$ is given by 
\begin{align*}
\sum_{k=2}^{N-2}([k] - 2) &= \sum_{k=2}^{m-2}(k - 1) + \sum_{k= m-1}^{n-1}(m - 2) + \sum_{k=n}^{n+m-4}(n + m - k - 3)\\
&= 2\sum_{k=2}^{m-2}(k - 1) + (m - 2)(n - m +1)\\
&= (m - 2)(n - 2).
\end{align*}
A similar argument holds in the case $m > n$.

(3) Since $a > 4$, $a + 1 > 5$. Hence, it follows from Remark~\ref{rmk:subspace}.

(4) Each generator $|e_{i}\rangle \otimes |f_{j+2}\rangle +a|e_{i+1}\rangle \otimes |f_{j+1}\rangle + |e_{i+2}\rangle \otimes |f_{j}\rangle$ in $\mathcal{S}^{(d)}$ \ $(2 \leq d \leq N-2)$ has Schmidt rank $3$.
\end{proof}

\vskip 3mm

\begin{rmk}
In Theorem~\ref{thm:subspaces} (3) if we replace the condition $a >4$ by the condition $|a+1|>5$, then it would give more examples.
\end{rmk}

\vskip 3mm

\begin{thm}
Let $m$ and $n$ be natural numbers such that $4 \leq \min\{m,n\}$. 
Let $N = n + m -2$, and $\{|e_i\rangle\}_{i=1}^{m}$ (resp. $\{|f_j\rangle\}_{j=1}^{n}$) be the canonical basis for $\C^m$ (resp. $\C^n$)
and set $g(i,j) =|e_{i+1}\rangle \otimes |f_{j+1}\rangle + a |e_{i}\rangle \otimes |f_{j+2}\rangle$.
Consider 
\begin{align*}
\mathcal{S} &= \mathrm{span}\{g(i,j) \colon 1 \leq i \leq m-1, 0 \leq j \leq n -2\},\\
\mathcal{T} &= \mathrm{span}\{g(i,j) + \frac{1}{a} g(i-1,j+1)  \colon 2 \leq i \leq m-1, 0 \leq j \leq n -3\},\\
\mathcal{U} &= \mathrm{span}\{ (g(i,j) + \frac{1}{a} g(i-1,j+1))  + (g(i-1,j+1)+\frac{1}{a} g(i-2, j+2) ) \colon \\
& \qquad 3 \leq i \leq m-1, 0 \leq j \leq n -4\} .
\end{align*}
When $a > 0$ and $a + \dfrac{1}{a} > 4$, we have the similar observation as in Theorem~\ref{thm:subspaces} as follows:

\begin{enumerate}
\item
$\mathcal{U} \subset \mathcal{T} \subset \mathcal{S}$,
\item
 Any element in $\mathcal{S}$ has Schmidt rank $\geq 2$, any generator in $\mathcal{S}$ has Schmidt rank $2$, and $\dim \mathcal{S} = (m-1)(n-1)$,
\item
Any element in $\mathcal{T}$ has Schmidt rank $\geq 3$, any generator in $\mathcal{T}$ has Schmidt rank $3$, and $\dim \mathcal{T} = (m-2)(n-2)$,
\item
 Any element in $\mathcal{U}$ has Schmidt rank $\geq 4$, any generator in $\mathcal{U}$ has Schmidt rank $4$, and $\dim \mathcal{U} = (m-3)(n-3)$.
\end{enumerate}
\end{thm}

\begin{proof}
(1) It is obvious.

(2) As in Theorem~\ref{thm:subspaces} we consider the $(n + 1) \times n$ matrix $K_n(a)$
$$
K_n(a) = \sum_{j=1}^n(|j\rangle +a |j+1\rangle)
% \otimes 
\langle j|.
$$
Then we know that the columns of $K_n(a)$ are linearly independent such that any nonzero linear combination of these columns has at least 2 nonzero entries. Let

\begin{align*}
D_k = \{|e_{i+1}\rangle \otimes |f_{j+1}\rangle &+a|e_{i}\rangle \otimes |f_{j+2}\rangle\\
&1 \leq i \leq m-1, 0\leq j \leq n-2, i + j = k\}.
\end{align*} 
Then any element of $\phi(D_k)$ is the matrix by replacing the $k$-th anti-diagonal by $(0,\dots,0,1,a,0,\dots,0)$. For $1 \leq k \leq N-1$, from the above observation, $\phi(D_k)$ is the set of $[k]-1$ linearly independent, where as in the proof of Theorem~\ref{thm:subspace}, if $m \leq n$, then
$$
[k] = \left\{\begin{array}{ccc}
k+1&\hbox{for}&k \in \{1, 2, \dots, m-1\}\\
m&\hbox{for}&k \in \{m, m+1, \dots, n-1\}\\
n + m - k - 1&\hbox{for} & k \in \{n, n+1, \dots, N-1 = n + m - 3\}
\end{array}
\right.
$$
Also, it follows that if $M$ is a matrix obtained by taking any nonzero linear combinations of matrices from $\phi(D_k)$, then $M$ has at least 2 nonzero entries in the $k$-th anti-diagonal, and the entries of $M$, other than those on $k$-th anti-diagonal are zeros. Since the determinant of the $2 \times 2$ submatrix with these $2$ nonzero entries in its principal anti-diagonal is clearly nonzero, any linear combination of the matrices in $\phi(D_k)$ has at least one nonzero order-$2$ minor, thus has rank at least $2$. 

Hence the dimension of $\mathcal{S}$ is given by 
\begin{align*}
\sum_{k=2}^{N-1}([k] - 1) &= \sum_{k=1}^{m-1}k + \sum_{k= m}^{n-1}(m - 1) + \sum_{k=n}^{n+m-3}(n + m - k - 2)\\
&= \frac{1}{2}(m-1)m + (m - 1)(n - m) + \frac{1}{2}(m-1)(m-2)\\
&= \frac{1}{2}(m-1)(m + 2n - 2m + m-2) = (m-1)(n-1).
\end{align*}
A similar argument holds in the case $m > n$. 

(3) and (4) follow from the same argument in Theorem~\ref{thm:subspace}.
\end{proof}

%%%%%%%%%%%%%%%%%%%%%%%%%%%%%%%%%%%%%%%%%%%%%%%%%%%%
\section{Acknowledgement}
Part of this research was carried out during the fourth author's stay at Department of Mathematics and Informatics, Chiba University. He would like to appreciate this institute for their hospitality. Research of the fourth author is partially supported by JSPS KAKENHI Grant Number JP17K05285.

%%%%%%%%%%%%%%%%%%%%%%%%%%%%%%%%%%%%%%%%%%%%%%%%%%

%%%%%%%%%%%%%%%%%%%%%%%%%%%%%%%%%%%%%%%%%%%%%%%%%%%

\end{document}